\numberwithin{thmcounter}{section}
\newaliascnt{thmauto}{thmcounter}
\newaliascnt{Defauto}{thmcounter}
\newaliascnt{exauto}{thmcounter}
\newaliascnt{lemauto}{thmcounter}
\newaliascnt{propauto}{thmcounter}
\newaliascnt{corauto}{thmcounter}
\newaliascnt{remauto}{thmcounter}
\newaliascnt{clmauto}{thmcounter}
\newcounter{athmcounter}
\newaliascnt{athmauto}{athmcounter}
\newaliascnt{alemauto}{athmcounter}
\newaliascnt{acorauto}{athmcounter}
\newtheorem{atheorem}[athmauto]{Theorem}
\newtheorem{acor}[acorauto]{Corollary}
\newtheorem{theorem}[thmauto]{Theorem}
\newtheorem{lemma}[lemauto]{Lemma}
\newtheorem{proposition}[propauto]{Proposition}
\newtheorem{corollary}[corauto]{Corollary}
\newtheorem{claim}[clmauto]{Claim}
\theoremstyle{definition}
\newtheorem{remark}[remauto]{Remark}
\DeclareMathOperator{\Sp}{Sp}
\DeclareMathOperator{\SL}{SL}
\DeclareMathOperator{\GL}{GL}
\DeclareMathOperator{\lcm}{lcm}
\providecommand{\Z}{\ensuremath\mathbb Z}
\providecommand{\N}{\ensuremath\mathbb N}
\begin{document}

\title{Quotients of braid groups by their congruence subgroups}
\author{Wade Bloomquist}
\author{Peter Patzt}
\author{Nancy Scherich}

\maketitle

\begin{abstract}
The congruence subgroups of braid groups  arise from a congruence condition on the integral Burau representation $B_n \to \GL_{n}(\Z)$. We find the image of such congruence subgroups in $\GL_{n}(\Z)$---an open problem posed by Dan Margalit. 
Additionally, we characterize the quotients of braid groups by their congruence subgroups in terms of symplectic congruence subgroups. 
\end{abstract}

\section{Introduction}

The congruence subgroups of braid groups  arise from a congruence condition on the integral Burau representation. They are finite-index normal subgroups of the braid group
and give insight to the braid groups.  This generalizes both how congruence subgroups are used in the study of integral linear groups (like $\SL_n(\Z)$) and how pure braid groups are used in the study of braid groups.

The \emph{integral Burau representation} $\rho\colon  B_n\rightarrow \GL_{n}(\mathbb{Z})$ is the (unreduced) Burau representation specialized at $t=-1$,
\[\rho\colon  B_n\xrightarrow{\text{Burau}}{}\GL_{n}(\mathbb{Z}[t^{\pm1}])\xrightarrow{t=-1}{} \GL_{n}(\mathbb{Z}).\]
The \emph{level $\ell$ congruence subgroup} of the braid group on $n$ strands $B_n$, denoted $B_n[\ell]$, is defined to be the preimage of the level $\ell$ congruence subgroup of the general linear group, or more explicitly, the kernel of the following composition:
    \[B_n[\ell]:= \ker\bigg(B_n \stackrel{\rho}{\longrightarrow} \GL_{n}(\Z)\xrightarrow{\!\!\!\!\!\!\mod\ell}{} \GL_{n}(\Z/\ell\Z) \bigg) = \{ b\in B_n \mid \rho(b) \equiv I_{n} \mod \ell\}.\]

These subgroups have been studied by many authors, e.g. A'Campo \cite{ACampo}, Appel--Bloomquist--Gravel--Holden \cite{ABGH}, Arnol\cprime{}d \cite{A68}, Assion \cite{Assion},  Brendle \cite{brendle}, Brendle--Margalit \cite{BM18}, Kordek--Margalit \cite{KM19}, McReynolds \cite{McReynolds}, Nakamura \cite{Nakamura}, and Stylianakis \cite{S18}. 
Despite the extensive study, many questions about these groups remain open. 
For example, except for $B_n[2]$, which Arnol\cprime{}d  proved to be the pure braid group on $n$ strands \cite{A68} and the first rational homology of $B_n[4]$ found by Kordek--Margalit \cite{KM19}, their group homology (even their abelianization) is generally unknown.

Various open questions about the integral Burau representation can be found in Section 3 of Margalit's problem list \cite{margalitproblems} and Brendle's mini-course notes \cite{brendle}.
One fundamental theme of questioning is to understand the image of the integral Burau representation through relevant restrictions and quotients.\\

In this paper we provide answers to the following three \emph{questions}:

What is the image of 
\begin{enumerate}
    \item the integral Burau representation $\rho\colon  B_n\rightarrow \GL_{n}(\mathbb{Z})$?
    \item the integral Burau representation reduced modulo $\ell$, $B_n\rightarrow \GL_{n}(\mathbb{Z/\ell\Z})$?
    \item $B_n[\ell]$ under $\rho$, $B_n[\ell]\rightarrow \GL_{n}(\mathbb{Z})$? (Problem 3.4 in \cite{margalitproblems})\\
\end{enumerate}

Question (3), or Problem 3.4 in \cite{margalitproblems},  has been solved for $B_n[2]$ by Brendle--Margalit \cite{BM18} and the solution for even level is an easy corollary which has been pointed out in \cite{margalitproblems}. In this paper, we answer Question (3) completely and the result is stated below in \autoref{thm:prob3.4}. Question (1) is in fact a special case of Question (3) at level $\ell = 1$, which we answer in \autoref{cor:F}.

We also answer Question (2) and describe the image of $B_n \to \GL_{n}(\Z/\ell\Z)$, or equivalently the quotients $B_n/B_n[\ell]$ in \autoref{thm:mainresult}.
There has already been  considerable progress on this problem which we outline here. 
For all $n$, each odd $\ell$, and odd prime $p$,
\begin{itemize}
    \item $B_n/B_n[2]\cong S_n$, the symmetric group on $n$ letters \hfill  \cite{A68} 
    \item $B_{2n+1}/B_{2n+1}[p]\cong \Sp_{2n}(\mathbb{Z}/p\mathbb{Z})$, the symplectic group \hfill \cite{ACampo}
    \item  $B_n[\ell]/B_n[2\ell] \cong S_n$ \hfill \cite{ABGH}  and \cite{S18}
    \item  
$B_n[2\ell]/B_n[4\ell] \cong (\Z/2\Z)^{{n \choose 2 }}.$ \hfill\cite{ABGH}  and \cite{BM18}

\item 
$B_n[\ell]/B_n[4\ell] \cong B_n/B_n[4]$,  \hfill\cite{ABGH} and \cite{KM19}\\
 which is a non-split extension of $S_n$ by $(\Z/2\Z)^{{n \choose 2 }}.$
\end{itemize}

\subsection*{Answering Question (2)}

In our first main theorem, \autoref{thm:mainresult}, we unify these results and show how these quotients are related to the congruence subgroups of the symplectic groups. 
For this, we introduce some notation. Let 
\[ \Gamma_{2g}[\ell] := \ker\bigg( \Sp_{2g}(\Z) \xrightarrow{\!\!\!\!\!\!\mod\ell}\Sp_{2g}(\Z/\ell\Z)  \bigg)= \{ A \in \Sp_{2g}(\Z)  \mid A \equiv I_{2g} \mod \ell\}.\]
denote the \emph{level $\ell$ congruence subgroup} of $ \Sp_{2g}(\Z)$. Denote the subgroups of $ \Sp_{2g}(\Z)$ and $\Sp_{2g}(\Z/\ell\Z)$ that fix the first standard basis vector $e_1$ by  $[\Sp_{2g}(\Z)]_{e_1}$ and $[\Sp_{2g}(\Z/\ell\Z)]_{e_1}$, respectively. We further denote the \emph{level $\ell$ congruence subgroup} of $ [\Sp_{2g}(\Z)]_{e_1}$ by
\[ \Gamma_{2g-1}[\ell] := \ker\bigg( [\Sp_{2g}(\Z)]_{e_1} \xrightarrow{\!\!\!\!\!\!\mod\ell} [\Sp_{2g}(\Z/\ell\Z)]_{e_1} \bigg)= \{ A \in [\Sp_{2g}(\Z)]_{e_1}  \mid A \equiv I_{2g} \mod \ell\}.\]
To unify the notation, we define
\[ \Gamma_n := \begin{cases} \Sp_{2g}(\Z)&\text{for $n=2g$} \\
[\Sp_{2g}(\Z)]_{e_1} &\text{for $n=2g-1$.}\end{cases}\]
With this notation, we also get that
\[ \Gamma_n/\Gamma_n[\ell] \cong \begin{cases} \Sp_{2g}(\Z/\ell\Z)&\text{for $n=2g$} \\
[\Sp_{2g}(\Z/\ell\Z)]_{e_1} &\text{for $n=2g-1$.}\end{cases}\]

For those familiar with the notation in Brendle--Margalit,  $[\Sp_{2g+2}(\Z)]_{e_1}$ is isomorphic to $[\Sp_{2g+2}(\Z)]_{\vec{y}_{g+1}}$ in \cite{BM18} and we describe our basis labelling conventions in \autoref{sec:backgroundsubgroups}.

Finally, before stating our main results, we describe an inclusion of the symmetric group $S_n$ into $\Gamma_{n-1}[\ell]/\Gamma_{n-1}[2\ell] \cong \Gamma_{n-1}/\Gamma_{n-1}[2]$. From Arnol\cprime{}d \cite{A68}, we know that 
\[B_n/B_n[2] \cong S_n.\]
This is precisely the image of $B_n$ in 
\[ \Gamma_{n-1}/\Gamma_{n-1}[2] .\]

 Our first main theorem, providing an answer to Question 2, is as follows.
 
 \begin{atheorem}\label{thm:mainresult}For an integer $\ell=2^{k}m$ with $m$ odd,
   \[B_{n}/B_{n}[\ell]\cong \begin{cases}
       \Gamma_{n-1}/\Gamma_{n-1}[\ell] &\text{ for $n=2,3$,}\\
        B_n/B_n[2^k] \times\Gamma_{n-1}/\Gamma_{n-1}[m]  &\text{ for $n\geq 4$}.
        \end{cases}
\]
And for $n\ge 4$, $B_n/B_n[2^k]$ is trivial for $k=0$, isomorphic to $S_n$ for $k=1$, and for $k\ge2$ it is the non-split extension of $S_n$ by $\Gamma_{n-1}[2]/\Gamma_{n-1}[2^k]$ given by the preimage of $S_n\le \Gamma_{n-1}/\Gamma_{n-1}[2]$ in $\Gamma_{n-1}/\Gamma_{n-1}[2^k]$ via the quotient map.  
\end{atheorem}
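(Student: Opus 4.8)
The plan is to separate the odd part from the $2$-power part of the level, handle each, and then show that they assemble as a direct product. Throughout I use the identification of $B_n/B_n[\ell]$ with the image of $B_n$ in $\Gamma_{n-1}/\Gamma_{n-1}[\ell]$ under the integral Burau representation, under which each standard generator $\sigma_i$ maps to a transvection. Writing $\ell=2^km$ with $m$ odd, the Chinese remainder theorem gives $\rho(b)\equiv I_n\pmod\ell$ exactly when $\rho(b)\equiv I_n\pmod{2^k}$ and $\rho(b)\equiv I_n\pmod m$, so $B_n[\ell]=B_n[2^k]\cap B_n[m]$ and there is an injection
\[
B_n/B_n[\ell]\hookrightarrow B_n/B_n[2^k]\times B_n/B_n[m]
\]
with both projections onto. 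Thus the crux of the product formula is to show this injection is onto, i.e.\ that $B_n[2^k]\cdot B_n[m]=B_n$; the two factors are then identified by the odd and $2$-power analyses below.

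For the odd factor I would prove $B_n/B_n[m]\cong\Gamma_{n-1}/\Gamma_{n-1}[m]$ for all odd $m$, extending A'Campo's prime-level computation. Factoring $m$ into odd prime powers and using the Chinese remainder theorem on $\Gamma_{n-1}$ reduces this to $m=p^j$. The image is automatically contained in $\Gamma_{n-1}/\Gamma_{n-1}[p^j]$, so the point is surjectivity; for this I would use that the transvections $\rho(\sigma_i)$ and their braid conjugates generate $\Gamma_{n-1}$ (the classical chain-of-transvections generation, in the vector-stabilizer form for even $n$), so that their images generate the full congruence quotient. This disposes of the odd factor for every $n$, and in particular feeds into the cases $n=2,3$ once combined with the $2$-power analysis below.

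The heart of the argument is the $2$-power factor for $n\ge4$: I would show the image of $B_n$ in $\Gamma_{n-1}/\Gamma_{n-1}[2^k]$ is exactly the preimage of $S_n$ under $\Gamma_{n-1}/\Gamma_{n-1}[2^k]\twoheadrightarrow\Gamma_{n-1}/\Gamma_{n-1}[2]$. The base $k=1$ is Arnol\cprime{}d's $B_n/B_n[2]\cong S_n$. To climb the tower I would compare the associated graded of the two congruence filtrations: a dimension count gives $\Gamma_{n-1}[2^{j-1}]/\Gamma_{n-1}[2^j]\cong(\Z/2)^{\binom n2}$ for every $j\ge2$ (both $\dim\Sp_{2g}$ for $n=2g+1$ and $\dim[\Sp_{2g}]_{e_1}$ for $n=2g$ equal $\binom n2$), matching $B_n[2^{j-1}]/B_n[2^j]\cong(\Z/2)^{\binom n2}$ from ABGH and Brendle--Margalit. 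Since Burau sends $B_n[2^{j-1}]/B_n[2^j]$ injectively into $\Gamma_{n-1}[2^{j-1}]/\Gamma_{n-1}[2^j]$ and the two groups have equal finite order, each layer map is an isomorphism; inducting on $j$ gives surjectivity onto $\Gamma_{n-1}[2]/\Gamma_{n-1}[2^k]$, hence the preimage description. Non-splitness for $k\ge2$ then descends from $k=2$: a section of $B_n/B_n[2^k]\to S_n$ would reduce modulo $4$ to a section of the non-split extension $B_n/B_n[4]\to S_n$ of Kordek--Margalit.

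Finally I would assemble the product. Each generator $\rho(\sigma_i)$ is a transvection, so its image in $B_n/B_n[2^k]$ has order a power of $2$ while its image in $B_n/B_n[m]$ has odd order; these orders being coprime, a Chinese remainder choice of exponent $M$ produces $\sigma_i^{M}\in B_n[2^k]$ with $\rho(\sigma_i^M)\equiv\rho(\sigma_i)\pmod m$, and symmetrically an element of $B_n[m]$ hitting $\rho(\sigma_i)$ modulo $2^k$. As the $\rho(\sigma_i)$ and their conjugates generate each factor, this shows $B_n[2^k]$ surjects onto $B_n/B_n[m]$ and $B_n[m]$ onto $B_n/B_n[2^k]$, so the injection above is onto and the product holds for all $n$. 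For $n=2,3$ the odd analysis together with the surjections $B_3\twoheadrightarrow\Sp_2(\Z)=\Gamma_2$ and $B_2\twoheadrightarrow\Gamma_1\cong\Z$ makes the $2$-power factor the full $\Gamma_{n-1}/\Gamma_{n-1}[2^k]$, so the product collapses to $\Gamma_{n-1}/\Gamma_{n-1}[\ell]$; for $n\ge4$ the $2$-power factor is the non-split extension identified above. I expect the main obstacle to be the third step: matching the two-adic congruence layers with the ABGH computation precisely enough to get surjectivity at every level, and controlling the extension class so that non-splitness propagates up the whole tower.
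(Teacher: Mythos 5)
There is a genuine gap, and it sits at the step carrying the real content of the theorem: surjectivity at odd levels. Your justification is that ``the transvections $\rho(\sigma_i)$ and their braid conjugates generate $\Gamma_{n-1}$.'' That claim is false for $n\ge 4$. The subgroup generated by the $\rho(\sigma_i)$ and their braid conjugates is exactly $\rho(B_n)$, and $\rho(B_n)$ is a \emph{proper} subgroup of $\Gamma_{n-1}$: by \autoref{cor:F} it is the preimage of $S_n\le \Gamma_{n-1}/\Gamma_{n-1}[2]$, of index $|\Gamma_{n-1}/\Gamma_{n-1}[2]|/n!>1$ for $n\ge 4$ (index $2$ for $n=4$, index $6$ for $n=5$). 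Your own base case exposes the contradiction: if the generation claim held, the mod-$2$ image of $B_n$ would be all of $\Gamma_{n-1}/\Gamma_{n-1}[2]$ rather than $S_n$, and $B_n/B_n[\ell]$ would equal $\Gamma_{n-1}/\Gamma_{n-1}[\ell]$ for \emph{every} $\ell$, contradicting the very statement being proved. The classical fact that $\Sp_{2g}(\Z)$ is generated by transvections does not apply to transvections along an $A_{n-1}$-chain of vectors; such a chain generates only the mod-$2$-constrained subgroup above, and this is precisely the subtlety of the whole problem. So surjectivity of $B_n\to\Gamma_{n-1}/\Gamma_{n-1}[m]$ for odd $m$ (A'Campo's theorem at odd prime level, and its extension to all odd levels and to even $n$) is asserted, not proved. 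The paper obtains it from two ingredients your outline never touches: Brendle--Margalit's theorem that $B_n[2]\to\Gamma_{n-1}[2]$ is surjective (\autoref{lem:BM}, extended there to $n=3,4$), and the multiplicativity $\Gamma_{n-1}[2]\cdot\Gamma_{n-1}[m]=\Gamma_{n-1}$ for odd $m$ (\autoref{prop:symplecticcong}, \autoref{prop:oddsymplecticcong}). Given these, every class in $\Gamma_{n-1}/\Gamma_{n-1}[m]$ has a representative in $\Gamma_{n-1}[2]$, which lifts to $B_n[2]\subset B_n$; this is the fix your framework needs.

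The remainder of your outline is essentially sound. The assembly step ($\sigma_i^M\in B_n[2^k]$ hitting $\rho(\sigma_i)$ modulo $m$, hence $B_n[2^k]\cdot B_n[m]=B_n$) is correct and is the same argument as the paper's \autoref{prop:braidcong}. The $2$-power step by matching associated graded layers is a viable alternative to the paper's five-lemma argument, but two of its inputs need care: the quotient $B_n[2\ell]/B_n[4\ell]\cong(\Z/2\Z)^{\binom{n}{2}}$ is cited from ABGH/Brendle--Margalit only for odd $\ell$ (so it supplies the layer $j=2$, not $j\ge3$), and the count $\Gamma_{n-1}[2^{j-1}]/\Gamma_{n-1}[2^j]\cong(\Z/2\Z)^{\binom{n}{2}}$ requires a lifting argument in the vector-stabilizer case ($n$ even), analogous to \autoref{prop:oddsymplecticcong}~(1). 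Both issues disappear if you instead use level-$2$ surjectivity of $B_n[2]\to\Gamma_{n-1}[2]$ directly, which you will need anyway for the odd part; that is exactly the paper's route. Your non-splitness argument (reduce modulo $4$ and invoke Kordek--Margalit) coincides with the paper's.
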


\subsection*{Answering Questions (1) and (3)}

Our second main theorem solves Margalit's Problem 3.4 from \cite{margalitproblems}. After distributing a draft of this paper, Charalampos Stylianakis \cite{Stylperscomm} made us aware that he independently solved this problem which can now be found in \cite{Sty23}.

\begin{atheorem}\label{thm:prob3.4}
The image of $B_n[\ell]$ in $\GL_{n}(\Z)$ under the integral Burau representation is completely characterized as follows.
\begin{enumerate}
    \item If $\ell$ is even or $n=2,3$, the image is \[\Gamma_{n-1}[\ell].\]
    \item If $n\ge 4$ and $\ell$ is odd the image is the preimage of 
\[\begin{tikzcd}S_n \arrow[r, hook]& \Gamma_{n-1}[\ell]/\Gamma_{n-1}[2\ell] \end{tikzcd}\]
 along the quotient map $\Gamma_{n-1}[\ell]\longrightarrow \Gamma_{n-1}[\ell]/\Gamma_{n-1}[2\ell].$

\end{enumerate}
\end{atheorem}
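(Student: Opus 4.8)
The plan is to reduce everything to an explicit description of the image $\im\rho=\rho(B_n)$ and then intersect with the mod-$\ell$ congruence condition. The starting observation is the elementary identity
\[ \rho(B_n[\ell]) \;=\; \im\rho \,\cap\, \ker\!\big(\GL_n(\Z)\xrightarrow{\bmod\ell}\GL_n(\Z/\ell\Z)\big), \]
which holds because $B_n[\ell]$ is by definition the set of braids whose Burau matrix is $\equiv I_n \bmod \ell$. Thus once $\im\rho$ is known as a concrete subgroup of $\GL_n(\Z)$, the image $\rho(B_n[\ell])$ is obtained simply by keeping those matrices that reduce to the identity modulo $\ell$.

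First I would pin down $\im\rho$ (this is the $\ell=1$ instance, recorded afterwards as Corollary F). Three inputs suffice. Since the specialization of the Burau representation at $t=-1$ preserves the symplectic form underlying $\Gamma_{n-1}$ (and, for even $n$, fixes $e_1$), we have $\im\rho\subseteq\Gamma_{n-1}$. Arnol\cprime{}d's theorem $B_n/B_n[2]\cong S_n$ says that reduction modulo $2$ carries $\im\rho$ onto the standard copy of $S_n$ inside $\Gamma_{n-1}/\Gamma_{n-1}[2]$; in particular $\im\rho\subseteq\pi^{-1}(S_n)$, where $\pi\colon\Gamma_{n-1}\to\Gamma_{n-1}/\Gamma_{n-1}[2]$ is reduction mod $2$. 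Finally, the Brendle--Margalit solution of the level-$2$ case gives $\rho(B_n[2])=\Gamma_{n-1}[2]$, hence $\Gamma_{n-1}[2]\subseteq\im\rho$. Because $\Gamma_{n-1}[2]=\ker\pi$ lies inside the group $\im\rho$ and $\pi(\im\rho)=S_n$, a one-line coset argument forces $\im\rho=\pi^{-1}(S_n)$ for $n\ge 4$. For $n=2,3$ the same bookkeeping (or the classical surjections $B_2\twoheadrightarrow\Gamma_1$ and $B_3\twoheadrightarrow\SL_2(\Z)=\Gamma_2$) yields $\im\rho=\Gamma_{n-1}$.

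With $\im\rho$ in hand, the theorem splits by the parity of $\ell$. When $n\in\{2,3\}$, or when $\ell$ is even, the congruence condition already forces triviality modulo $2$: indeed $2\mid\ell$ gives $\Gamma_{n-1}[\ell]\subseteq\Gamma_{n-1}[2]\subseteq\im\rho$, so intersecting $\im\rho$ with $\{A\equiv I\bmod\ell\}=\Gamma_{n-1}[\ell]$ returns all of $\Gamma_{n-1}[\ell]$; this is part (1). When $n\ge4$ and $\ell$ is odd, the intersection equals $\{A\in\Gamma_{n-1}[\ell]\mid A\bmod 2\in S_n\}$. To identify this with the asserted preimage I would invoke the isomorphism $\Gamma_{n-1}[\ell]/\Gamma_{n-1}[2\ell]\cong\Gamma_{n-1}/\Gamma_{n-1}[2]$ (valid for odd $\ell$ by the Chinese Remainder Theorem together with strong approximation for the symplectic group), under which the quotient map $\Gamma_{n-1}[\ell]\to\Gamma_{n-1}[\ell]/\Gamma_{n-1}[2\ell]$ is precisely reduction mod $2$ and the distinguished $S_n$ matches the Arnol\cprime{}d copy. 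This gives part (2).

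The genuinely hard input is the containment $\Gamma_{n-1}[2]\subseteq\im\rho$, equivalently the surjectivity of $B_n[2]\to\Gamma_{n-1}[2]$: this is the substantial theorem of Brendle--Margalit, and it is exactly what makes $\im\rho$ as large as $\pi^{-1}(S_n)$ rather than merely a subgroup surjecting onto $S_n$. Everything afterward---the intersection identity, the parity split, and the congruence-subgroup bookkeeping via the Chinese Remainder Theorem---is elementary group theory, so I expect no further obstacle there. The one point to verify carefully is that the copy of $S_n$ appearing in the target $\Gamma_{n-1}[\ell]/\Gamma_{n-1}[2\ell]$ is identified with the Arnol\cprime{}d copy under the fixed isomorphism, so that parts (1) and (2) glue consistently at the even/odd boundary.
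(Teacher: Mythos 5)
Your proposal is correct, and it reaches the theorem by a genuinely different route than the paper. The paper never computes $\im\rho$ up front: it proves the theorem level by level---for even $\ell$ it quotes the even-level surjectivity $B_n[\ell]\twoheadrightarrow\Gamma_{n-1}[\ell]$ (\autoref{cor:imageelleven}), and for odd $\ell$ and $n\ge 4$ it chases the map of short exact sequences comparing $1\to B_n[2\ell]\to B_n[\ell]\to S_n\to 1$ with $1\to \Gamma_{n-1}[2\ell]\to \Gamma_{n-1}[\ell]\to \Gamma_{n-1}[\ell]/\Gamma_{n-1}[2\ell]\to 1$, lifting a matrix in the preimage of $S_n$ in two steps (first a permutation, then an error term in $\Gamma_{n-1}[2\ell]$); the image of $B_n$ itself (\autoref{cor:F}) is then deduced from the theorem at $\ell=1$. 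You invert this order: you establish \autoref{cor:F} first, showing $\im\rho=\pi^{-1}(S_n)$ for $\pi\colon\Gamma_{n-1}\to\Gamma_{n-1}/\Gamma_{n-1}[2]$ by a coset argument from Arnol\cprime{}d plus level-$2$ Brendle--Margalit, and then obtain every level at once from the tautology $\rho(B_n[\ell])=\im\rho\cap\Gamma_{n-1}[\ell]$, rewritten in the statement's form via the isomorphism $\Gamma_{n-1}/\Gamma_{n-1}[2]\cong\Gamma_{n-1}[\ell]/\Gamma_{n-1}[2\ell]$. The hard inputs are identical (Arnol\cprime{}d and Brendle--Margalit), but your route does all remaining bookkeeping on the symplectic side: you never need $B_n[\ell]/B_n[2\ell]\cong S_n$, nor the multiplicativity of braid congruence subgroups (\autoref{prop:braidcong}), both of which the paper's chase uses. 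The point you flag as delicate---whether the copy of $S_n$ in $\Gamma_{n-1}[\ell]/\Gamma_{n-1}[2\ell]$ is the Arnol\cprime{}d copy---is in fact automatic, because the paper \emph{defines} that copy as the image of the Arnol\cprime{}d copy under exactly this isomorphism.

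Two citation caveats, neither of which breaks your argument. First, Brendle--Margalit prove $\rho(B_n[2])=\Gamma_{n-1}[2]$ only for $n\ge 5$; to cover $n=4$ your coset argument needs the extension of their generating-set argument to small $n$, which the paper carries out in \autoref{lem:BM}. Second, your appeal to ``CRT plus strong approximation'' for the decomposition $\Gamma_{n-1}[2]\cdot\Gamma_{n-1}[\ell]=\Gamma_{n-1}$ is classical (Newman) only when $n-1$ is even; when $n-1$ is odd, $\Gamma_{n-1}\cong[\Sp_{n}(\Z)]_{e_1}$ is a stabilizer subgroup, and this product decomposition is not in the literature---it is exactly what the paper proves by hand in \autoref{prop:oddsymplecticcong}. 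Both facts are true and available in the paper, so your proof goes through as written.
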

Specializing the level to $\ell=1$ we arrive at an answer to Question 3, as follows.
\begin{acor}\label{cor:F}
   The image of $B_n[1]=B_n$ under the integral Burau representation 
   \begin{enumerate}
   \item for $n=2$ is $[\SL_2(\Z)]_{e_1} = [\Sp_2(\Z)]_{e_1} \cong \Z$,
       \item for $n=3$ is $\SL_2(\Z) = \Sp_2(\Z)$,
       \item for $n\ge 4$ is the preimage of $S_n$ along the quotient map $\Gamma_{n-1}\rightarrow \Gamma_{n-1}/\Gamma_{n-1}[2]$.
   \end{enumerate}
\end{acor}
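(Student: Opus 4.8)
The plan is to derive the entire statement directly from \autoref{thm:prob3.4} by specializing to $\ell = 1$. Two observations make this immediate: first, $B_n[1] = B_n$ by the definition of the congruence subgroups, and second, the level-one congruence subgroup of any group is the whole group, so $\Gamma_n[1] = \Gamma_n$ for all $n$. Thus the only real work is to match up the notation in each of the three regimes and to unwind the abbreviated symbols $\Gamma_1$ and $\Gamma_2$.

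First I would treat the small cases $n = 2, 3$ using part (1) of \autoref{thm:prob3.4}, which applies precisely because $n \in \{2,3\}$, independently of the parity of $\ell$. This identifies the image as $\Gamma_{n-1}[1] = \Gamma_{n-1}$. For $n = 3$, unwinding the unified notation with $n-1 = 2 = 2g$ (so $g = 1$) gives $\Gamma_2 = \Sp_2(\Z)$, and since $\Sp_2 = \SL_2$ we obtain $\SL_2(\Z) = \Sp_2(\Z)$ as claimed. For $n = 2$, we have $n-1 = 1 = 2g-1$ (so $g = 1$), whence $\Gamma_1 = [\Sp_2(\Z)]_{e_1} = [\SL_2(\Z)]_{e_1}$. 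I would then finish this case by the elementary computation that any $A \in \SL_2(\Z)$ with $A e_1 = e_1$ has the form $\left(\begin{smallmatrix} 1 & b \\ 0 & 1 \end{smallmatrix}\right)$ with $b \in \Z$, so that this stabilizer is infinite cyclic, generated by the single elementary transvection, and hence isomorphic to $\Z$.

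For $n \ge 4$ I would invoke part (2) of \autoref{thm:prob3.4}, which is the applicable case since $\ell = 1$ is odd. Substituting $\ell = 1$ replaces $\Gamma_{n-1}[\ell]$ by $\Gamma_{n-1}$ and $\Gamma_{n-1}[2\ell]$ by $\Gamma_{n-1}[2]$, so the image is exactly the preimage of the inclusion $S_n \hookrightarrow \Gamma_{n-1}/\Gamma_{n-1}[2]$ under the quotient map $\Gamma_{n-1} \to \Gamma_{n-1}/\Gamma_{n-1}[2]$, as stated.

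Because the corollary is a pure specialization of a result I may assume, there is no genuine obstacle beyond careful notational bookkeeping: correctly reading off $\Gamma_1$ and $\Gamma_2$ from the unified definition and tracking that $\ell = 1$ lands in part (2) for $n \ge 4$ but in part (1) for $n = 2, 3$. The one place demanding an actual (if trivial) argument rather than mere substitution is the explicit identification $[\SL_2(\Z)]_{e_1} \cong \Z$ in the $n = 2$ case.
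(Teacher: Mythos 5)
Your proposal is correct and matches the paper's own treatment: the corollary is stated there as an immediate specialization of \autoref{thm:prob3.4} at $\ell=1$, with $\Gamma_{n-1}[1]=\Gamma_{n-1}$ and the case split ($n=2,3$ via part (1), $n\ge 4$ via part (2) since $1$ is odd) exactly as you describe. Your added verification that the stabilizer $[\SL_2(\Z)]_{e_1}$ consists of the unipotent upper-triangular matrices and is therefore infinite cyclic is a correct (and harmless) elaboration of the identification $\Gamma_1\cong\Z$ that the paper leaves implicit.
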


\noindent \textbf{Acknowledgements.}
We would like to thank Tara Brendle, Dan Margalit, Jeremy Miller, Dan Petersen, Oscar Randal-Williams, Ismael Sierra, and Charalampos Stylianakis for helpful conversations. We would also thank the anonymous referee for helpful comments. This material is based upon work supported by the National Science Foundation under Grant No. DMS-1929284 while the third author was in residence at the Institute for Computational and Experimental Research in Mathematics in Providence, RI, during the Braids program. The first author is supported in part by NSF Grant DMS-1745583. The second author was supported by the Danish National Research Foundation through the Copenhagen Centre for Geometry and Topology (DNRF151) and a Simons Collaboration Grant.

\section{Background on the Burau representation and the symplectic groups}\label{sec:background}

We start with a  brief explanation of how to view the integral Burau representation as a symplectic representation. We explain some details of symplectic groups and their stabilizer subgroups. After that, we turn our attention to the congruence subgroups of the symplectic groups.

\subsection{Background on the integral Burau representation.}\label{sec:backgroundsubgroups}

In this section, we first give a brief introduction to viewing the integral Burau representation as the action of braid groups on the first homology of surfaces and how congruence subgroups fit into this framework. For a more detailed introduction, see Brendle \cite{brendle}.

As stated in the introduction, the \emph{integral Burau representation} $\rho\colon B_n\rightarrow \GL_{n}(\mathbb{Z})$ is the Burau representation specialized at $t=-1$,
\[\rho\colon B_n\xrightarrow{\text{Burau}}\GL_{n}(\mathbb{Z}[t^{\pm1}])\xrightarrow{t=-1} \GL_{n}(\mathbb{Z}).\]
More precisely, the Artin generators $\sigma_i$ (a half twist on the $i$-th and the $(i+1)$-st strands) are explicitly sent to the matrices
\[ \rho(\sigma_i)= \left( \begin{array}{c|cc|c} I_{i-1} & 0 & 0 & 0  \\ \hline 0 & 2 & -1 & 0  \\ 0 & 1 & 0 & 0   \\ \hline 0 & 0 & 0  & I_{n-i-1} \end{array} \right )\]
for $1\le i \le n-1$.

There is an alternating bilinear form on $\Z^n$ defined by 
\[ \langle e_i, e_j\rangle = \begin{cases} (-1)^{i+j+1}&i<j\\ 0 &i=j \\(-1)^{i+j} &i>j \end{cases}\]
on the standard basis $e_1,\dots, e_n$ of $\Z^n$.  Further, let $\Gamma_{n-1}$ be the subgroup of $\GL_n(\Z)$ defined as
\[ \Gamma_{n-1} = \{ A \in \GL_n(\Z) \mid \langle Av, Aw\rangle = \langle v,w\rangle \text{ for all $v,w\in \Z^n$, } Av_n = v_n\text{, and } w_n^T A = w_n^T\}\]
where 
\[ v_n = e_1 +\dots +e_n \quad\text{and}\quad w_n = e_1 - e_2 + \dots+ (-1)^{n-1}e_n.\]
It is immediate to check that $\rho(\sigma_i) \in \Gamma_{n-1}$ for all $1\le i \le n-1$ and we can therefore describe the integral Burau representation as a map
\[ \rho\colon B_n \longrightarrow \Gamma_{n-1}.\]
The following lemma explains why this group is isomorphic to the group with the same name from the introduction.

\begin{lemma}\label{lem:Gamma}
\[ \Gamma_n \cong \begin{cases} \Sp_{2g}(\Z)&\text{for $n=2g$} \\
[\Sp_{2g}(\Z)]_{e_1} &\text{for $n=2g-1$.}\end{cases}\]
\end{lemma}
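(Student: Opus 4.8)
The plan is to treat the statement as a piece of linear algebra over $\Z$ about the alternating form $\langle\cdot,\cdot\rangle$ on $\Z^n$ together with the two distinguished vectors $v_n$ and $w_n$, working with the realization $\Gamma_{n-1}\subseteq\GL_n(\Z)$ from the background definition and splitting into two cases according to the parity of the ambient dimension $n$; in this reindexing the claim becomes $\Gamma_{n-1}\cong\Sp_{2g}(\Z)$ when $n=2g+1$ is odd and $\Gamma_{n-1}\cong[\Sp_{2g}(\Z)]_{e_1}$ when $n=2g$ is even. The first step is to record two elementary identities relating $v_n$, $w_n$, and the Gram matrix $J$ of the form, obtained by a direct computation of the row sums of $J$: one has $Jv_n=w_n$ when $n$ is even and $Jv_n=0$ when $n$ is odd, while $w_n^Tv_n=\sum_i(-1)^{i-1}$ equals $1$ for $n$ odd. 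Equivalently, $\langle v_n,\cdot\rangle=-w_n^T$ in the even case, and $v_n$ spans the radical of the form in the odd case. These are precisely the facts that make the two defining conditions $Av_n=v_n$ and $w_n^TA=w_n^T$ interact correctly with the symplectic condition.

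For $n=2g$ even I would first check that the form is unimodular, so that $\Sp(\Z^n,\langle\cdot,\cdot\rangle)\cong\Sp_{2g}(\Z)$; I would do this by exhibiting the explicit $\Z$-basis $e_1+e_2,\,e_2+e_3,\,\dots,\,e_{n-1}+e_n,\,e_n$, in which the Gram matrix is the tridiagonal \emph{path} matrix with $+1$ on the superdiagonal and $-1$ on the subdiagonal, whose Pfaffian is $1$. The identity $\langle v_n,\cdot\rangle=-w_n^T$ together with $\Sp$-invariance shows that for $A\in\Sp(\Z^n,\langle\cdot,\cdot\rangle)$ the condition $Av_n=v_n$ already forces $w_n^TA=w_n^T$, so the $w$-condition is redundant and $\Gamma_{n-1}=[\Sp(\Z^n,\langle\cdot,\cdot\rangle)]_{v_n}$ is exactly the stabilizer of $v_n$. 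Since $v_n$ is primitive and the form is unimodular, $v_n$ extends to a symplectic basis; choosing the identification $\Sp(\Z^n,\langle\cdot,\cdot\rangle)\cong\Sp_{2g}(\Z)$ to send $v_n$ to $e_1$ then identifies this stabilizer with $[\Sp_{2g}(\Z)]_{e_1}$.

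For $n=2g+1$ odd the form is degenerate with radical $\langle v_n\rangle$, and since $w_n^Tv_n=1$ the hyperplane $H:=\ker(w_n^T)$ is a complementary direct summand, giving $\Z^n=\langle v_n\rangle\oplus H$. Every $A\in\Gamma_{n-1}$ fixes $v_n$, preserves $H$ (because $w_n^TA=w_n^T$), and preserves the form, hence restricts to an element of $\Sp(H,\langle\cdot,\cdot\rangle|_H)$; I would show the restriction map $\Gamma_{n-1}\to\Sp(H)$ is an isomorphism. Injectivity is immediate, since an element fixing $v_n$ and acting trivially on $H$ is the identity on $\langle v_n\rangle\oplus H=\Z^n$; surjectivity follows by extending any $B\in\Sp(H)$ to $\mathrm{id}\oplus B$, which preserves the form (the cross terms vanish as $v_n$ lies in the radical), fixes $v_n$, and satisfies $w_n^TA=w_n^T$. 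Finally the vectors $e_1+e_2,\dots,e_{n-1}+e_n$ form a $\Z$-basis of $H$ in which $\langle\cdot,\cdot\rangle|_H$ again has the path Gram matrix of Pfaffian $1$, so $H$ is unimodular symplectic of rank $2g$ and $\Sp(H)\cong\Sp_{2g}(\Z)$.

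The main obstacle is not any single hard step but ensuring the isomorphism lands on the full symplectic group over $\Z$ rather than a finite-index or merely rational subgroup: this is exactly the integral unimodularity of the form, and the cleanest way to secure it is the explicit path basis above, whose Pfaffian computation simultaneously furnishes the symplectic identification. A secondary point requiring care is the degenerate odd case, where one must verify that restriction to $H$ is genuinely a bijection onto $\Sp(H)$ and not merely a homomorphism, and, in the even case, that the passage to the stabilizer of $e_1$ is legitimate, i.e. that $\Sp_{2g}(\Z)$ can carry the primitive vector $v_n$ to $e_1$.
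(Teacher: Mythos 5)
Your proposal is correct, and it shares the paper's overall skeleton---split by the parity of the ambient dimension, take the kernel of $w_n^T$ as the symplectic part, and finish with transitivity of $\Sp_{2g}(\Z)$ on primitive vectors---but its two technical pillars are genuinely different from the paper's, and in one place cleaner. The paper's main device is the explicit family $a_i=\sum_{k=1}^{2i}e_k$, $b_i=e_{2i}+e_{2i+1}$, which it checks is a symplectic basis; you instead certify unimodularity of the form by computing its Gram matrix in the path basis $e_1+e_2,\dots,e_{n-1}+e_n$ (plus $e_n$ in the even case) and taking a Pfaffian, then invoke the structure theorem for unimodular alternating forms over $\Z$. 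More notably, your identity $Jv_n=w_n$ (for $n$ even), i.e.\ $w_n^T=-\langle v_n,\cdot\rangle$, makes the redundancy of the condition $w_n^TA=w_n^T$ an immediate consequence of form-invariance and $Av_n=v_n$; the paper proves the same redundancy by showing that such an $A$ preserves the kernel of the dual vector, hence scales it, and then pinning the scale to $1$ by pairing against $b_g$---the longest computation in its proof. Your handling of the degenerate case is also more explicit about why restriction to $H=\ker(w_n^T)$ is a bijection onto $\Sp(H)$ (injectivity from the splitting $\Z^n=\langle v_n\rangle\oplus H$, surjectivity from extending $B$ to $\mathrm{id}\oplus B$), a point the paper passes over with a brief ``this implies.'' What the paper's route buys in return is the basis $a_1,b_1,\dots,a_g,b_g$ itself, which is reused verbatim in the proofs of \autoref{lem:Gamma'} and \autoref{lem:SESGammaodd}; your path basis is, up to signs, the basis $c_i$ the paper later introduces for the reduced Burau representation, so nothing is lost conceptually, but substituting your proof into the paper would require re-deriving those vectors at the points where they are cited.
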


\begin{proof}
As defined above, $\Gamma_n$ is the group of isometries of $\Z^{n+1}$ that fix $v_{n+1} = e_1+\dots + e_{n+1}$ and $w_n^T = e_1^T - e_2^T + \dots+ (-1)^{n}e_{n+1}^T$. We now define
\[ a_i =\sum_{k=1}^{2i} e_k \quad\text{and}\quad b_i=\begin{cases} e_{2i} + e_{2i+1} & 2i \neq n+1,\\ e_{n+1} & 2i = n+1.\end{cases}\]
Clearly, the set of vectors $\{a_i\}_i \cup \{b_i\}_i$ gives a basis of $\Z^{n+1}$. Easy computations show that these vectors also have the properties
\[ \langle a_i, b_j\rangle = \delta_{i,j}, \quad \langle a_i, a_j\rangle = \langle b_i, b_j\rangle =0\]
for all (appropriate) $i,j$.

Let us first consider the case $n =2g$ is even. Let
\[ W = \{ v \in \Z^{n+1} \mid w_{n+1}^Tv = 0\}\]
be the kernel of the dual vector $w_{n+1}^T$. It is easy to see that 
 $a_1,b_1, \dots, a_g,b_g$ gives a symplectic basis of $W \subset \Z^{n+1}$. In fact, $\Z^{n+1}$ splits into the direct sum $W\oplus V$, where $W$ is a maximal symplectic subspace and $V$ is the unique one-dimensional isotropic summand spanned by $v_{n+1}$. $\Gamma_n$ preserves $W$ and fixes $V$. This implies that $\Gamma_n$ is isomorphic to the symplectic group of $\Z^{2g}$.

If on the other hand $n=2g-1$, then $a_1,b_1,\dots, a_g,b_g$ is a symplectic basis of $\Z^{n+1}$. And therefore, $\Gamma_n$ is contained in the subgroup of $\Sp_{2g}(\Z)$ that fixes $v_{n+1}$. We want to argue that all elements in this subgroup fix $w_{n+1}^T$ automatically. For that note, that $a_1,b_1, \dots, a_g =v_{n+1}$ are all contained in
\[ W = \{ v \in \Z^{n+1} \mid w_{n+1}^Tv = 0\}.\]
Therefore $W$ splits into $S \oplus V$, where $S$ is a maximal symplectic summand of $W$ spanned by $a_1,b_1, \dots, a_{g-1},b_{g-1}$ and $V$ is the unique one-dimensional isotropic summand of $W$. Let $A\in \Sp_{2g}(\Z)$ that fixes $v_{n+1}$, then all $a_i,b_i$ with $i<g$ have to be send to a linear combination $u$ of $a_1, b_1, \dots, a_g = v_{n+1}$ because otherwise $\langle v_{n+1}, u\rangle$ wouldn't be zero. This implies that $A$ preserves $W$, which means that it sends $w^T_{n+1}$ to a multiple of itself. Further, if $b_g$ is sent to
\[A b_g=  \sum_{i=1}^g \lambda_1 a_i + \mu_i b_i,\]
the coefficient $\mu_g$ is determined by
\[ \mu_g = \langle v_{n+1}, Ab_g\rangle = \langle v_{n+1}, b_g\rangle = 1.\]
This implies that
\[ w^T_{n+1} Ab_g = \mu_g = 1\]
because all other basis elements are in $W$. And thus $w^T_{n+1}A = w^T_{n+1}$. Finally, we can do a change of coordinates. Since $v_n$ is unimodular, and $\Sp_{2g}(\Z)$ acts transitively on all unimodular vectors of $\Z^{2g}$, $\Gamma_n$ is isomorphic to $[\Sp_{2g}(\Z)]_{e_1}$.
\end{proof}

Let us define the congruence subgroups of $\Gamma_{n-1}$ by
\[ \Gamma_{n-1}[\ell] = \{ A\in \GL_n(\Z) \mid \langle Av, Aw\rangle = \langle v,w\rangle , \, Av_n = v_n, \, w^T_nA = w^T_n ,\, A \equiv I_n \mod\ell\} \unlhd \Gamma_{n-1}.\]
From the previous lemma, we immediately see that this definition agrees with the one given in the introduction.

\subsection{Background on the reduced integral Burau representation}

The reduced Burau representation is one dimension smaller than the Burau representation. As we have seen in \autoref{sec:backgroundsubgroups}, each matrix in the image of $\rho\colon B_n \to \GL_n(\Z)$ fixes the dual vector $w_n^T = e_1 -e_2+ \dots + (-1)^{n-1}e_n$. 
Therefore the kernel
\[ W = \{ v \in \Z^{n} \mid w_{n}^Tv = 0\}\]
 is a subrepresentation of the integral Burau representation and 
\[c_1 =  e_1 + e_2, c_2= -(e_2+e_3), \dots , c_{n-1} = (-1)^n(e_{n-1} +  e_n)\]
is a basis of $W$. Using this basis, we get the \emph{reduced integral Burau representation}
\[ \bar \rho\colon B_n \longrightarrow \GL_{n-1}(\Z)\]
with
\begin{gather*}
\bar\rho(\sigma_1) =   \left( \begin{array}{cc|c}1 & 1 & 0 \\ 0 & 1 & 0 \\ \hline 0 & 0 & I_{n-3} \end{array} \right),\quad \bar\rho(\sigma_{n-1}) =  \left( \begin{array}{c|cc} I_{n-3} & 0 & 0 \\ \hline 0 & 1 & 0 \\ 0 &-1 & 1 \end{array} \right),\quad
\bar\rho(\sigma_i)= \left( \begin{array}{c|ccc|c} I_{i-2} & 0 & 0 & 0 & 0 \\ \hline 0 & 1 & 0 & 0 & 0 \\ 0 & -1 & 1 & 1 & 0 \\ 0 & 0 & 0 & 1 & 0 \\ \hline 0 & 0 & 0 & 0 & I_{n-i-2} \end{array} \right ) ,
\end{gather*}
for $1<i<n-1$. For these formulas, we assume $n\ge 3$, otherwise the reduced integral Burau representation $W=0$.

Note that the alternating bilinear form on $\Z^n$ from above restricts to $W$ by
\[ \langle c_i,c_j\rangle = \begin{cases} -1 &j=i+1,\\ 1 & j=i-1,\\ 0 &\text{otherwise.}\end{cases}\]
Using this basis, we get an isometric isomorphism from $W$  to $\Z^{n-1}$ whose alternating bilinear form is given by
\[ \langle e_i,e_j\rangle' = \begin{cases} -1 &j=i+1,\\ 1 & j=i-1,\\ 0 &\text{otherwise.}\end{cases}\]
Let us define
\[ \Gamma'_{n-1} = \{ A\in \GL_{n-1}(\Z) \mid \langle v,w\rangle' = \langle Av,Aw\rangle' \text{ for all $v,w\in\Z^{n-1}$ and $A v'_n = v'_n$ if $n$ is even}\}\]
where if $n$ is even
\[ v'_n = e_1+e_3+\dots+e_{n-1}\]
is the image of $v_n \in W$ via the isometry to $\Z^n$. 
Clearly, the image of $\bar\rho$ lies in $\Gamma'_{n-1}$. Note that if $n$ is even, $v_n\in W$ but it spans the unique isotropic subspace of $W$, so any isometric automorphism of $W$ sends $v_n$ to $\pm v_n$. If $n$ is odd, $v_n$ spans an isotropic summand in $\Z^n$ that is complementary to the symplectic summand $W$.

The following lemma describes $\Gamma'_n$ in terms of symplectic groups.

\begin{lemma}\label{lem:Gamma'}
\[\Gamma'_{2g}=\Sp_{2g}(\Z) \cong \Gamma_{2g}\]
\[\Gamma'_{2g-1} \cong \Sp_{2g-2}(\Z) \ltimes \Z^{2g-2}\]
\end{lemma}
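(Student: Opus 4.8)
The plan is to analyze the alternating form $\langle\cdot,\cdot\rangle'$ on $\Z^{m}$ directly, treating $m=2g$ (so $n=2g+1$ is odd and no fixed-vector condition is imposed) and $m=2g-1$ (so $n=2g$ is even and the condition $Av'_n=v'_n$ applies) as separate cases.

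For the even case, the first step is to check that the tridiagonal alternating form on $\Z^{2g}$, whose Gram matrix has $-1$ on the superdiagonal and $+1$ on the subdiagonal, is unimodular. I would compute its Pfaffian: the only perfect matching of $\{1,\dots,2g\}$ whose product of Gram entries is nonzero pairs the consecutive indices $(1,2),(3,4),\dots,(2g-1,2g)$, so $\mathrm{Pf}=\pm 1$ and $\det=\mathrm{Pf}^2=1$. Hence the form is a unimodular symplectic form on $\Z^{2g}$, and by definition $\Gamma'_{2g}$ is its isometry group $\Sp_{2g}(\Z)$; together with $\Gamma_{2g}\cong\Sp_{2g}(\Z)$ from \autoref{lem:Gamma}, this yields the first displayed isomorphism. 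Alternatively one can exhibit an explicit symplectic basis.

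For the odd case the form on $\Z^{2g-1}$ is degenerate, so the key first step is to identify its radical. Writing out $\langle v,e_j\rangle'=v_{j+1}-v_{j-1}$ (with $v_0=v_{2g}=0$) and setting these to zero for all $j$ forces all even-indexed coordinates to vanish and all odd-indexed coordinates to be equal; thus the radical $R$ is exactly the rank-one submodule spanned by $v'_n=e_1+e_3+\dots+e_{2g-1}$. This explains the fixed-vector condition: every isometry preserves $R$ and hence sends $v'_n$ to $\pm v'_n$, and $\Gamma'_{2g-1}$ selects the $+$ sign. The induced form on $\Z^{2g-1}/R$ is non-degenerate of rank $2g-2$, giving a homomorphism $\pi\colon\Gamma'_{2g-1}\to\Sp_{2g-2}(\Z)$. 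Next I would analyze the kernel and split $\pi$: an element of $\ker\pi$ fixing $v'_n$ has the form $x\mapsto x+\phi(x)v'_n$ with $\phi$ a functional vanishing on $v'_n$, i.e. $\phi\in\mathrm{Hom}(\Z^{2g-1}/R,\Z)\cong\Z^{2g-2}$; since $v'_n$ lies in the radical one checks that every such map is automatically an isometry, identifying $\ker\pi\cong\Z^{2g-2}$. To split $\pi$ (and prove surjectivity at once), I would use the explicit complement $C=\langle e_1,\dots,e_{2g-2}\rangle$, verify that $\{v'_n,e_1,\dots,e_{2g-2}\}$ is a $\Z$-basis so that $\Z^{2g-1}=R\oplus C$, and observe that the form restricted to $C$ is again the even tridiagonal form, hence unimodular by the first case. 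Extending a symplectic automorphism of $C$ by the identity on $R$ gives a section $\Sp_{2g-2}(\Z)\to\Gamma'_{2g-1}$, and the resulting split short exact sequence yields $\Gamma'_{2g-1}\cong\Sp_{2g-2}(\Z)\ltimes\Z^{2g-2}$.

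The main obstacle I anticipate is integrality in the odd case: splitting everything over $\Q$ is immediate, but the content is in showing that the radical is a rank-one $\Z$-summand, that $\ker\pi$ is free abelian of the correct rank realized by genuine integral isometries, and that a unimodular complement $C$ exists over $\Z$ so that $\pi$ surjects onto the integral symplectic group. The Pfaffian and unimodularity computations underlying both cases are routine.
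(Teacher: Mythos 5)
Your proof is correct and follows essentially the same route as the paper's: both cases rest on splitting $\Gamma'_{2g-1}$ as a semidirect product via the quotient by the span of $v'_n$, identifying the kernel with $\Z^{2g-2}$, and constructing a group-theoretic section from a symplectic complement, exactly as in the paper's proof of \autoref{lem:Gamma'}. The differences are only in bookkeeping: you establish unimodularity of the tridiagonal form by a Pfaffian computation and take the coordinate complement $\langle e_1,\dots,e_{2g-2}\rangle$, whereas the paper imports the explicit symplectic basis $a'_i, b'_i$ carried over from the proof of \autoref{lem:Gamma}.
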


\begin{proof}
If $n=2g+1$ is odd, $W$ and thereby $\Z^{n-1}$ with $\langle -,-\rangle'$ is symplectic. Therefore $\Gamma'_{2g} = \Sp_{2g}(\Z)$.

If $n=2g$ is even, let us define 
\[ a'_i =\sum_{k=1}^{i} e_{2k-1} \text{ for $1 \le i\le g$}\quad\text{and}\quad b'_i=-e_{2i}\text{ for $1 \le i \le g-1$.}\]
Note that these vectors are the images of $a_1,b_1, \dots, a_g$ defined in the proof of \autoref{lem:Gamma} and form a basis of $\Z^{n-1}$, where $a'_g = v'_n$ spans the unique isotropic summand and $a_1,b_1, \dots, a_{g-1},b_{g-1}$ is a symplectic basis of a maximal symplectic summand of $\Z^{n-1}$.

We define a surjection together with a section
\[\begin{tikzcd}  \Gamma'_{2g-1} \arrow[r, two heads] & \Sp_{2g}(\Z) \arrow[l, bend right=20]. \end{tikzcd}\]
Let $A$ be a matrix in $\Gamma'_{2g-1}$. We know it fixes $v'_n$, therefore $A$ induces an isometry on $\Z^{n-1}/V$, where $V$ is the span of $a'_g=v'_n$. We denote the images of $a'_1,b'_1, \dots, a'_{g-1},b'_{g-1}\in \Z^{n-1}$ in this quotient by $\bar a'_1,\bar b'_1, \dots, \bar a'_{g-1},\bar b'_{g-1}$. Because $v'_n$ is isotropic in $\Z^{n-1}$, the alternating bilinear form descends to the quotient and those vectors give a symplectic basis of $\Z^{n-1}/V$. Let $\bar A\in \Sp_{2g-2}(\Z)$ be the isometry induced by $A$ with respect to that basis. This mapping clearly gives a group homomorphism. For the section, let $B\in \Sp_{2g-2}(\Z)$. We define a matrix $\tilde B\in \Gamma'_{2g-1}$ by having it fix $a'_g=v'_n$ and act on $ a'_1, b'_1, \dots,  a'_{g-1}, b'_{g-1}$ the same way $B$ acts on $\bar a'_1,\bar b'_1, \dots, \bar a'_{g-1},\bar b'_{g-1}$. It follows immediately that this gives a section. The kernel of the surjection is given by matrices $A\in \Gamma'_{2g-1}$ such that 
\[ Aa'_i = a'_i + \lambda_iv'_n \quad\text{and}\quad  Ab'_i = b'_i + \mu_iv'_n\]
for $\lambda_i,\mu_i \in \Z$ and $1\le i \le g-1$. This implies that the kernel is isomorphic to $\Z^{2g-2}$. It is easy to see that $\Sp_{2g-2}(Z)$ acts on $\Z^{2g-2}$ by matrix multiplication.
\end{proof}

We would also like to compare $\Gamma'_{n-1}$ to $\Gamma_{n-1}$ when $n$ is even. The following lemma states the difference.

\begin{lemma}\label{lem:SESGammaodd}
There is a short exact sequence
\[ 0 \longrightarrow \Z \longrightarrow \Gamma_{2g-1}  \longrightarrow \Gamma'_{2g-1} \longrightarrow 1.\]
\end{lemma}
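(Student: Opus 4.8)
The plan is to realize the middle arrow as restriction to the hyperplane $W = \ker(w_n^T)$ and then read off the three terms symplectically. Write $n = 2g$. By (the proof of) \autoref{lem:Gamma}, the lattice $\Z^{n} = \Z^{2g}$ with the form $\langle-,-\rangle$ is a unimodular symplectic lattice, $v_n$ is a primitive isotropic vector, and $\Gamma_{2g-1}$ is the stabilizer of $v_n$ in $\Sp_{2g}(\Z)$. A direct computation gives $\langle v_n, x\rangle = -w_n^T x$ for all $x$, so $W = \ker(w_n^T) = v_n^\perp$; in particular the condition $w_n^T A = w_n^T$ is automatic once $A$ is an isometry fixing $v_n$. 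Hence every $A \in \Gamma_{2g-1}$ preserves $W = v_n^\perp$ and restricts to an isometry of $W$ fixing $v_n$, and since $\Gamma'_{2g-1}$ is by definition the group of isometries of $W$ fixing $v_n$ (transported to $\Z^{n-1}$ via the $c_i$-basis), restriction to $W$ gives a homomorphism $r\colon \Gamma_{2g-1} \to \Gamma'_{2g-1}$, the intended surjection. I would also record, from \autoref{lem:Gamma'} and the surrounding discussion, that the radical of $\langle-,-\rangle|_W$ is exactly $\Z v_n$.

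Next I would compute $\ker r$. An element $A$ of the kernel fixes $W = v_n^\perp$ pointwise. Choosing $u$ with $\langle v_n, u\rangle = -1$, so that $\Z^{2g} = W \oplus \Z u$, and writing $Au = cu + w_0$ with $w_0 \in W$, the isometry condition paired against $v_n$ forces $c = \langle u, v_n\rangle = 1$, while paired against all of $W$ it forces $\langle w_0, y\rangle = 0$ for every $y \in W$, i.e.\ $w_0$ lies in the radical $\Z v_n$ of $W$. Thus $A$ is the symplectic transvection $x \mapsto x + \lambda\langle x, v_n\rangle v_n$ for some $\lambda \in \Z$; conversely each such transvection fixes $W$ pointwise and lies in $\Gamma_{2g-1}$. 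These transvections form a subgroup isomorphic to $\Z$, giving the injection $0 \to \Z \to \Gamma_{2g-1}$ with image $\ker r$.

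The substantive step is surjectivity of $r$. Given $\bar A \in \Gamma'_{2g-1}$, an isometry of $W$ fixing $v_n$, I would extend it to an isometry $A$ of $\Z^{2g}$ fixing $v_n$ by setting $A|_W = \bar A$ and prescribing $Au$. The requirement $\langle Au, \bar A y\rangle = \langle u, y\rangle$ for all $y \in W$ says that the functional $\langle Au, -\rangle|_W$ must equal the prescribed $\psi := \langle u, \bar A^{-1}(-)\rangle \in W^{*}$. Since the form is unimodular and $W$ is a direct summand, the map $\Z^{2g} \to W^{*}$, $x \mapsto \langle x, -\rangle|_W$, is surjective with kernel $W^\perp = \Z v_n$, so a vector $Au$ with $\langle Au, -\rangle|_W = \psi$ exists. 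Evaluating at $v_n$ gives $\langle Au, v_n\rangle = \langle u, v_n\rangle$, whence $Au - u \in v_n^\perp = W$, i.e.\ $Au \equiv u \pmod{W}$; together with $\bar A \in \mathrm{Aut}(W)$ the change-of-basis matrix is block upper triangular with determinant $\det(\bar A) = \pm1$, so $A \in \GL_{2g}(\Z)$. It is visibly an isometry fixing $v_n$, hence lies in $\Gamma_{2g-1}$ with $r(A) = \bar A$.

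The step I expect to be the genuine obstacle is this surjectivity: one must produce an \emph{integral} lift of $\bar A$ and verify it is invertible over $\Z$, and both hinge on $W$ being a unimodular direct summand of the symplectic lattice $\Z^{2g}$, i.e.\ on the structural input of \autoref{lem:Gamma}. Well-definedness of $r$ and the kernel computation are then routine. Assembling these pieces yields the short exact sequence $0 \to \Z \to \Gamma_{2g-1} \xrightarrow{\,r\,} \Gamma'_{2g-1} \to 1$.
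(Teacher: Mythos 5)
Your proof is correct, and it shares the paper's overall skeleton: both realize the map $\Gamma_{2g-1}\to\Gamma'_{2g-1}$ as restriction to the invariant sublattice $W$, and both identify the kernel with $\Z$ (your transvections $x\mapsto x+\lambda\langle x,v_n\rangle v_n$ are exactly the paper's maps fixing $a_1,b_1,\dots,a_g$ and sending $b_g\mapsto b_g+\lambda a_g$). Where you genuinely diverge is the surjectivity step, which is indeed the substantive part. The paper constructs an explicit set-theoretic section: given $B\in\Gamma'_{2g-1}$, it defines $\tilde B$ on the symplectic basis $a_1,b_1,\dots,a_g,b_g$ from the proof of \autoref{lem:Gamma} by letting it act like $B$ on $a_1,b_1,\dots,a_g$ and sending $b_g$ to $b_g+\sum_{i=1}^{g-1}\bigl(\langle b_g,\tilde Ba_i\rangle b_i-\langle b_g,\tilde Bb_i\rangle a_i\bigr)$, and then verifies by direct computation that $\tilde B\in\Gamma_{2g-1}$. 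You instead argue abstractly: unimodularity of the symplectic form makes $x\mapsto\langle x,-\rangle|_W$ a surjection $\Z^{2g}\to W^{*}$ with kernel $\Z v_n$, so the required image $Au$ of a complementary vector $u$ exists, and block-triangularity gives $A\in\GL_{2g}(\Z)$. Each approach buys something: the paper's formula produces a concrete lift, about which the authors can then remark that it is a section of sets but not a group homomorphism; your duality argument is coordinate-free, works verbatim for the stabilizer of any primitive isotropic vector in a unimodular symplectic lattice, and your identity $\langle v_n,x\rangle=-w_n^Tx$ gives a cleaner proof that $W=v_n^{\perp}$ and that the condition $w_n^TA=w_n^T$ is automatic for isometries fixing $v_n$ (a redundancy the paper establishes inside the proof of \autoref{lem:Gamma} by a longer argument).
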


\begin{proof}
Next, let us define a surjection together with a set-theoretic section 
\[\begin{tikzcd}  \Gamma_{2g-1} \arrow[r, two heads] & \Gamma'_{2g-1} \arrow[l, bend right=20]. \end{tikzcd}\]
Given a matrix $A\in \Gamma_{2g-1}$, we know that it leaves $W$ invariant and thus restricting $A$ to $W$ (and using the isometry to $\Z^{n-1}$) we get an element of $\Gamma'_{2g-1}$. This is clearly a group homomorphism. That it is surjective follows from the construction of a section. Let $B$ be a matrix in $\Gamma'_{2g-1}$, define the matrix $\tilde B \in \Gamma_{2g-1}$ that acts on the basis $a_1,b_1, \dots, a_g,b_g$ (from the proof of \autoref{lem:Gamma})  by acting on $a_1,b_1, \dots, a_g$ the same way that $B$ acts on $a'_1,b'_1, \dots, a'_g$ and by sending $b_g$ to
\[ b_g + \sum_{i=1}^{g-1}\bigg( \langle b_g, \tilde Ba_i\rangle b_i - \langle b_g, \tilde Bb_i\rangle a_i \bigg).\] 
A straightforward calculation shows that $\tilde B$ is indeed an element of $\Gamma_{2g-1}$ and it is immediately clear that this construction gives a set-theoretic section. (One can compute that it is not a group homomorphism.) The kernel of the surjection is given by all $A\in \Gamma_{2g-1}$ that leave $a_1,b_1, \dots, a_g$ fixed. That means they are determined by where they send $b_g$. Since $Aa_1 = a_1, Ab_1= b_1, \dots, Aa_g = a_g, Ab_g$ has to remain a symplectic basis of $\Z^{2g}$, we conclude that $Ab_g = b_g + \lambda a_g$ for any $\lambda \in \Z$. Note that this kernel is isomorphic to the infinite cyclic group $\Z$. \end{proof}

\begin{remark}\label{rmk:composition}
Let $\psi\colon \Gamma_{n-1} \to \Gamma'_{n-1}$ be the isomorphism from \autoref{lem:Gamma'} for odd $n$ and the surjection from \autoref{lem:SESGammaodd} for even $n$. Then
\[ \bar \rho = \psi \circ \rho.\]
\end{remark}

\subsection{Multiplicativity of congruence subgroups of symplectic groups}\label{sec:TechnicalSymplectic}

The results about symplectic groups that we need are well understood and documented. We will summarize them in the following proposition.

\begin{proposition}\label{prop:symplecticcong}\ 
\begin{enumerate}
\item $\Sp_{2g}(\Z)/\Gamma_{2g}[\ell] \cong \Sp_{2g}(\Z/\ell\Z)$ \cite[Theorem VII.20]{newman}
\item $\Gamma_{2g}[\ell]\cap \Gamma_{2g}[m] = \Gamma_{2g}[\lcm(\ell,m)]$ and $\Gamma_{2g}[\ell]\cdot \Gamma_{2g}[m] = \Gamma_{2g}[\gcd(\ell,m)]$ \cite[Theorem VII.22]{newman}
\item $\Gamma_{2g}[\gcd(\ell,m)]/ \Gamma_{2g}[\ell] \cong \Gamma_{2g}[m]/\Gamma_{2g}[\lcm(\ell,m)]$ \cite[Theorem VII.23]{newman}
\end{enumerate}
\end{proposition}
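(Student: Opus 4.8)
The plan is to treat part (1) as the genuine content and derive parts (2) and (3) from it by elementary group theory, since the intersection identity and the isomorphism in (3) are formal once (1) and the product identity are in hand.

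First I would prove (1), that the reduction map $\Sp_{2g}(\Z)\to\Sp_{2g}(\Z/\ell\Z)$ is surjective; its kernel is $\Gamma_{2g}[\ell]$ by definition, so surjectivity plus the first isomorphism theorem yields the stated isomorphism. By the Chinese Remainder Theorem $\Sp_{2g}(\Z/\ell\Z)\cong\prod_p\Sp_{2g}(\Z/p^{v_p(\ell)}\Z)$, and integral matrices realizing distinct prime-power residues can be recombined modulo $\ell$, so it suffices to prove surjectivity onto a single factor $\Sp_{2g}(\Z/p^k\Z)$. For this I would invoke that $\Sp_{2g}$ over any commutative ring is generated by elementary symplectic transvections $v\mapsto v+\lambda\langle v,u\rangle u$; over $\Z/p^k\Z$ these generate the full group, and each transvection has an evident integral lift, namely the same transvection defined over $\Z$. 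Lifting a generating set then gives surjectivity. This is the step I expect to be the main obstacle, as it rests on the generation-by-transvections theorem (equivalently, on strong approximation for the symplectic group).

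Next, for (2) the intersection identity $\Gamma_{2g}[\ell]\cap\Gamma_{2g}[m]=\Gamma_{2g}[\lcm(\ell,m)]$ is immediate from the definitions, since a matrix is congruent to $I$ modulo both $\ell$ and $m$ exactly when it is congruent to $I$ modulo $\lcm(\ell,m)$. For the product identity the inclusion $\Gamma_{2g}[\ell]\cdot\Gamma_{2g}[m]\subseteq\Gamma_{2g}[\gcd(\ell,m)]$ is clear because $\gcd(\ell,m)$ divides both moduli. For the reverse inclusion, given $C\in\Gamma_{2g}[\gcd(\ell,m)]$ I would seek a factorization $C=AB$ with $A\in\Gamma_{2g}[\ell]$ and $B=A^{-1}C\in\Gamma_{2g}[m]$; this amounts to producing a symplectic $A$ with $A\equiv I\bmod\ell$ and $A\equiv C\bmod m$. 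These congruences are compatible modulo $\gcd(\ell,m)$ precisely because $C\equiv I\bmod\gcd(\ell,m)$, so working prime by prime one assembles a target residue $\bar A\in\Sp_{2g}(\Z/\lcm(\ell,m)\Z)$—taking the identity at primes where $\ell$ is more divisible and the reduction of $C$ at the remaining primes—and one checks it reduces to $I$ modulo $\ell$ and to $C$ modulo $m$. Lifting $\bar A$ to an honest symplectic matrix over $\Z$ via part (1) then furnishes the desired $A$.

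Finally, (3) is a formal consequence of (2) via the second isomorphism theorem, which gives
\[ \Gamma_{2g}[\ell]\cdot\Gamma_{2g}[m]\,/\,\Gamma_{2g}[\ell]\;\cong\;\Gamma_{2g}[m]\,/\,\bigl(\Gamma_{2g}[\ell]\cap\Gamma_{2g}[m]\bigr). \]
Substituting the two identities from (2) rewrites the left-hand side as $\Gamma_{2g}[\gcd(\ell,m)]/\Gamma_{2g}[\ell]$ and the right-hand side as $\Gamma_{2g}[m]/\Gamma_{2g}[\lcm(\ell,m)]$, which is exactly the claimed isomorphism.
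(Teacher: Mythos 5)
Your proposal is essentially correct, but note that the paper does not actually prove this proposition: all three parts are quoted from Newman's book \cite[Theorems VII.20, VII.22, VII.23]{newman}, and the paper reserves its own work for the odd-index analog, \autoref{prop:oddsymplecticcong}. Your argument is therefore a genuine reconstruction rather than a retelling, and its architecture --- surjectivity of reduction by lifting generators, the intersection identity by inspection, the product identity by solving the simultaneous congruences $A \equiv I \bmod \ell$ and $A \equiv C \bmod m$, and part (3) as a formal consequence of (2) via the second isomorphism theorem --- matches precisely the strategy the paper deploys for \autoref{prop:oddsymplecticcong}, where the congruence-lifting step is isolated as \autoref{Lem:y=mModl} and handled by peeling off a symplectic block and invoking the Newman--Smart lemma \cite{NS}; what your CRT-plus-lifting argument buys is that parts (2) and (3) need no new matrix constructions once (1) is known for the modulus $\lcm(\ell,m)$. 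Two points in your write-up need care. First, generation of $\Sp_{2g}(\Z/p^k\Z)$ by symplectic transvections is a theorem about symplectic groups over local rings (Klingenberg); you correctly flag it as the main external input. Second, your reduction of part (1) to prime powers is stated too loosely: knowing that the image of $\Sp_{2g}(\Z)$ surjects onto each factor $\Sp_{2g}(\Z/p^{v_p(\ell)}\Z)$ does not by itself give surjectivity onto the product (a subgroup of $G\times G$ can surject onto both factors while being the diagonal). The fix is available inside your own framework: when lifting a transvection $v \mapsto v + \lambda\langle v,u\rangle u$ defined over $\Z/p^{v_p(\ell)}\Z$, choose the integer representative of $\lambda$ to be divisible by $\ell/p^{v_p(\ell)}$, so the integral transvection reduces to the identity at every other prime dividing $\ell$; then the image of $\Sp_{2g}(\Z)$ in $\prod_p \Sp_{2g}(\Z/p^{v_p(\ell)}\Z)$ contains each factor separately, and multiplying these lifts completes the recombination. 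With that refinement your part (2) assembly, which only uses part (1) at the modulus $\lcm(\ell,m)$ together with the Chinese Remainder Theorem, goes through verbatim.
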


We will now prove analogous results for the stabilizer subgroups $\Gamma_{2g+2} \cong [\Sp_{2g+2}(\Z)]_{e_1}$ and their congruence subgroups $\Gamma_{2g+2}[\ell]\cong \ker\big( [\Sp_{2g+2}(\mathbb{Z})]_{e_1} \to  [\Sp_{2g+2}(\mathbb{Z}/\ell\Z)]_{e_1}\big)$. These results are certainly not surprising to the experts, but we were unable to find them in the literature.

\begin{proposition}\label{prop:oddsymplecticcong}For $n$ odd,
\begin{enumerate}
\item $\Gamma_n/\Gamma_{n}[\ell] \cong [\Sp_{n+1}(\Z/\ell\Z)]_{e_1}$,
\item $\Gamma_{n}[\ell]\cap \Gamma_{n}[m] = \Gamma_{n}[\lcm(\ell,m)]$ and $\Gamma_{n}[\ell]\cdot \Gamma_{n}[m] = \Gamma_{n}[\gcd(\ell,m)]$, and
\item $\Gamma_{n}[\gcd(\ell,m)]/ \Gamma_{n}[\ell] \cong \Gamma_{n}[m]/\Gamma_{n}[\lcm(\ell,m)]$.
\end{enumerate}
\end{proposition}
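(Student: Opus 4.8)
The plan is to reduce the statements of \autoref{prop:oddsymplecticcong} to the corresponding statements for the full symplectic group, which are already recorded in \autoref{prop:symplecticcong}. The key structural input is that $\Gamma_n = [\Sp_{n+1}(\Z)]_{e_1}$ is a \emph{stabilizer} subgroup, so the congruence quotients can be compared with those of $\Sp_{n+1}(\Z)$ via the action on the orbit of $e_1$. First I would address part (1). By \autoref{lem:Gamma}, $\Gamma_n \cong [\Sp_{n+1}(\Z)]_{e_1}$, and $\Gamma_n[\ell]$ is by definition the kernel of $[\Sp_{n+1}(\Z)]_{e_1} \to [\Sp_{n+1}(\Z/\ell\Z)]_{e_1}$. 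So part (1) amounts to showing that this reduction map is \emph{surjective}, i.e. every element of $\Sp_{n+1}(\Z/\ell\Z)$ fixing $e_1$ lifts to an element of $\Sp_{n+1}(\Z)$ fixing $e_1$. I would prove this by combining the surjectivity of $\Sp_{n+1}(\Z)\to\Sp_{n+1}(\Z/\ell\Z)$ from \autoref{prop:symplecticcong}(1) with a transitivity argument: $\Sp_{n+1}(\Z)$ acts transitively on unimodular vectors, and given $\bar A\in[\Sp_{n+1}(\Z/\ell\Z)]_{e_1}$ one lifts $\bar A$ to some $A_0\in\Sp_{n+1}(\Z)$, notes that $A_0 e_1 \equiv e_1 \bmod \ell$ so $A_0 e_1$ is a unimodular vector congruent to $e_1$, and then corrects $A_0$ by a symplectic matrix in $\Gamma_{n+1}[\ell]$ carrying $A_0 e_1$ back to $e_1$, using strong approximation/transitivity of $\Gamma_{n+1}[\ell]$ on unimodular vectors in a fixed residue class mod $\ell$.

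The main work is in parts (2) and (3), and here I expect the cleanest route is to deduce them from the analogous symplectic statements by intersecting with the stabilizer. For part (2), the inclusion $\Gamma_n[\lcm(\ell,m)] \subseteq \Gamma_n[\ell]\cap\Gamma_n[m]$ is immediate and the reverse is elementary from the definition (an integer matrix congruent to $I$ mod $\ell$ and mod $m$ is congruent to $I$ mod $\lcm(\ell,m)$), so the intersection statement is essentially formal. The product statement $\Gamma_n[\ell]\cdot\Gamma_n[m] = \Gamma_n[\gcd(\ell,m)]$ is the genuinely substantive claim. I would observe $\Gamma_n[\ell] = \Gamma_{n+1}[\ell]\cap \Gamma_n$ where $\Gamma_{n+1}[\ell]$ is the symplectic congruence subgroup of the ambient $\Sp_{n+1}(\Z)$; the containment $\Gamma_n[\ell]\cdot\Gamma_n[m]\subseteq\Gamma_n[\gcd(\ell,m)]$ is clear, and for the reverse I would take $A\in\Gamma_n[\gcd(\ell,m)]$, use \autoref{prop:symplecticcong}(2) to factor $A = BC$ with $B\in\Gamma_{n+1}[\ell]$ and $C\in\Gamma_{n+1}[m]$ inside $\Sp_{n+1}(\Z)$, and then correct this factorization so that both factors additionally lie in the stabilizer $\Gamma_n$. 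Finally, part (3) follows from (1) and (2) by the second isomorphism theorem exactly as in the symplectic case: the composite $\Gamma_n[m] \hookrightarrow \Gamma_n[\gcd(\ell,m)] \twoheadrightarrow \Gamma_n[\gcd(\ell,m)]/\Gamma_n[\ell]$ is surjective by the product formula in (2) and has kernel $\Gamma_n[m]\cap\Gamma_n[\ell] = \Gamma_n[\lcm(\ell,m)]$ by the intersection formula, yielding the desired isomorphism.

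The hard part will be the correction arguments that keep the lifted or factored matrices inside the stabilizer $\Gamma_n$ rather than merely in $\Sp_{n+1}$. Concretely, in (1) I must ensure the correcting factor fixes $e_1$, and in the product formula for (2) I must modify the symplectic factorization $A=BC$ so that each factor individually stabilizes $e_1$. I would handle this by exploiting that $A$ itself already fixes $e_1$: writing $Be_1 = e_1 + \ell v$ for some $v$, the corresponding deviation of $C$ is forced, and one can absorb it by multiplying $B$ and $C$ by a transvection-type element of the appropriate congruence level that fixes $e_1$ and whose contribution cancels. The technical point is that transvections along vectors paired suitably with $e_1$ generate enough of the relevant congruence subgroup to realize these corrections while remaining in the stabilizer; I would verify this by an explicit description of the stabilizer of $e_1$ as a semidirect product (of $\Sp_{n-1}$ with an abelian unipotent radical) compatible with the congruence filtration, which reduces all three parts to the symplectic results already quoted for $\Sp_{n-1}(\Z)$ together with a routine computation in the unipotent radical.
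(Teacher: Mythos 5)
Your strategy is viable and genuinely different from the paper's, so it is worth comparing. The paper proves (1) and the product formula in (2) by explicit block-matrix lifting: an element of $[\Sp_{n+1}(\Z/\ell\Z)]_{e_1}$ is written in the block form forced by fixing $e_1$, the inner symplectic block is lifted using \autoref{prop:symplecticcong}(1) (for (1)) or the refined lifting statement of Newman--Smart (for the key claim \autoref{Lem:y=mModl}), the unipotent data $(x,v_2)$ is lifted by hand (by CRT in the claim), and the remaining entries are then \emph{determined} by the symplectic condition, $\tilde a_i=\langle \tilde v_2,\tilde v_i\rangle$. You instead lift or factor in the ambient group $\Sp_{n+1}(\Z)$ and correct afterwards. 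Your correction mechanism does work, but note the precise requirements: in (2), inserting $U^{-1}U$ between $B$ and $C$ forces the \emph{single} element $U$ to satisfy $U\equiv I \bmod \ell$ (so that $BU^{-1}\in\Gamma_{n+1}[\ell]$), $U\equiv I\bmod m$ (so that $UC\in\Gamma_{n+1}[m]$), and $U(Ce_1)=e_1$; this is consistent because $Ce_1=B^{-1}e_1\equiv e_1 \bmod{\lcm(\ell,m)}$, so one needs transitivity at level $\lcm(\ell,m)$. Your route is more conceptual and avoids matrix bookkeeping; the paper's is elementary and self-contained modulo two quoted references.

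Two points, however, need repair. First, the entire approach rests on transitivity of $\Gamma_{2g}[L]$ on unimodular vectors in a fixed residue class mod $L$, which you invoke as ``strong approximation'' without proof or citation. Be aware that this fact is essentially \emph{equivalent} to part (1): given it, (1) follows in two lines, and conversely (1) plus transitivity of $\Sp_{2g}(\Z)$ on unimodular vectors yields it (lift a stabilizing element mod $L$ and compose). So as written, your proof of (1) translates the statement into an unproven statement of the same strength; this is legitimate only with a precise, non-circular reference (the fact is indeed in the literature and can be proved directly with level-$L$ symplectic transvections), and that reference is the crux of your whole argument. Second, your proposed verification via the structure of the stabilizer starts from a false premise: $[\Sp_{n+1}(\Z)]_{e_1}$ is $\Sp_{n-1}(\Z)\ltimes H$ where $H$ is a \emph{Heisenberg} group --- two-step nilpotent with center $\Z$ generated by the transvection along $e_1$ --- not an abelian unipotent radical (it is abelian only for $n=1$). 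This is visible in the paper's block form, where the entries $a_i=\langle v_2,v_i\rangle$ record exactly the nonabelian twisting. The reduction-to-$\Sp_{n-1}$-plus-unipotent-computation plan survives with $H$ in place of an abelian group, but the computation you call routine must be done in $H$. Relatedly, your correcting element in (2) does not ``fix $e_1$'': it must move $Ce_1$ back to $e_1$, exactly as you correctly set it up for (1). Parts (2)(intersection) and (3) are fine and agree with the paper's argument.
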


\begin{proof}
Let us start with (1).

We need to show that $[\Sp_{n+1}(\mathbb{Z})]_{e_1} \to  [\Sp_{n+1}(\mathbb{Z}/\ell\Z)]_{e_1}$ is surjective. Let $A\in [\Sp_{n+1}(\Z/\ell\Z)]_{e_1}$ and let $B$ be the lower right $(n-2)\times (n-2)$ submatrix of $A$ with the following entries.
\[ A=\begin{bmatrix}
1 & x & a_3& \cdots & a_{n+1}\\
0 & 1 & 0 & \cdots & 0\\
\vline & \vline &\vline  & &\vline  \\
0 & v_2 & v_3 &\cdots &v_{n+1}  \\
\vline & \vline & \vline && \vline  \\
\end{bmatrix}
 \qquad B=\begin{bmatrix}
 \vline  & &\vline  \\
  v_3 &\cdots &v_{n+1}  \\
 \vline && \vline  \\
\end{bmatrix}\]
Clearly, $B \in \Sp_{n-2}(\Z/\ell\Z)$. Since $\Sp_{n-2}(\Z) \to \Sp_{n-2}(\Z/\ell\Z)$, we can choose a $\tilde B\in \Sp_{n-2}(\Z)$ so that $\tilde B \equiv B \mod \ell$. Also choose $\tilde{x}\in\Z$ so that $\tilde{x}\equiv x\mod \ell$, and  $\tilde{v}_2\in\Z^{n-1}$ so that $\tilde{v}_2\equiv v_2\mod \ell$. If $\tilde v_3, \dots, \tilde v_{n+1}$ are the columns of $\tilde B$, set $\tilde a_i = \langle \tilde v_2, \tilde v_i\rangle$ and set
\[\tilde{A}=\begin{bmatrix}
1 & \tilde{x} & \tilde{a}_3& \cdots & \tilde{a}_{n+1}\\
0 & 1 & 0 & \cdots & 0\\
\vline & \vline & & & \\
0 & \tilde{v}_2 &  &\tilde{B} &  \\
\vline & \vline &  &&   \\
\end{bmatrix}.\] 
This defines a matrix $\tilde A \in [\Sp_{n+1}(\Z)]_{e_1}$.
To check that $\tilde A \equiv A \mod \ell$, it only remains to check that $\tilde a_i \equiv a \mod \ell$. Note that
\[ \tilde a_i = \langle \tilde v_2, \tilde v_i\rangle \equiv  \langle  v_2, v_i\rangle = a_i \mod \ell.\]

We continue with (2).

Observe that $h\in \Gamma_n[\lcm(\ell,m)]$ if and only if $h=I+X$ such that all entries of $X$ are divisible by $\lcm(\ell,m)$ if and only if $h=I+X$ such that all entries of $X$ are divisible by $\ell$ and $m$ if and only if $h\in \Gamma_n[\ell]\cap \Gamma_n[m]$.

The second part of (2) is slightly more complicated. To prove this, we first prove the following claim.

\begin{claim}\label{Lem:y=mModl}
Let $n$ be odd and $d=\gcd(m,\ell)$. Then for every $A\in\Gamma_n[d]$, there exists $\tilde A\in \Gamma_n[m]$ so that $\tilde A\equiv A\mod\ell$.
\end{claim}

\begin{proof}
\renewcommand{\qedsymbol}{\ensuremath\blacksquare}
Let $A\in \Gamma_n[d]$ and let $B$ be the lower right $(n-2)\times (n-2)$ submatrix with the following entries.
\[ A=\begin{bmatrix}
1 & x & a_3& \cdots & a_{n+1}\\
0 & 1 & 0 & \cdots & 0\\
\vline & \vline &\vline  & &\vline  \\
0 & v_2 & v_3 &\cdots &v_{n+1}  \\
\vline & \vline & \vline && \vline  \\
\end{bmatrix}
 \qquad B=\begin{bmatrix}
 \vline  & &\vline  \\
  v_3 &\cdots &v_{n+1}  \\
 \vline && \vline  \\
\end{bmatrix}\]
Clearly, $B \in \Gamma_{n-2}[d]$. By Lemma 4 from Newman--Smart \cite{NS}, there exists $\tilde B\in \Gamma_{n-2}[m]$ with $\tilde B\equiv B\mod\ell$.   There exists $\tilde x \in\Z$ and $\tilde v_2 \in\Z^{n-1}$ so that $\tilde x\equiv x\mod\ell$, $\tilde x\equiv 0\mod m$, $\tilde v_2\equiv v_2\mod \ell$, and $\tilde v_2\equiv 0\mod m$. If $\tilde v_3, \dots, \tilde v_{n+1}$ are the columns of $\tilde B$, set $\tilde a_i = \langle \tilde v_2, \tilde v_i\rangle$ and set
\[\tilde{A}=\begin{bmatrix}
1 & \tilde{x} & \tilde{a}_3& \cdots & \tilde{a}_{n+1}\\
0 & 1 & 0 & \cdots & 0\\
\vline & \vline & & & \\
0 & \tilde{v}_2 &  &\tilde{B} &  \\
\vline & \vline &  &&   \\
\end{bmatrix}.\] 
This defines a matrix $\tilde A \in [\Sp_{n+1}(\Z)]_{e_1}$.
To check that $\tilde A\in \Gamma_n[m]$, it only remains to check that $\tilde a_i \equiv 0 \mod m$. Note that
\[ \tilde a_i = \langle \tilde v_2, \tilde v_i\rangle \equiv 0 \mod m.\]
To check that $\tilde A \equiv A \mod \ell$, it only remains to check that $\tilde a_i \equiv a_i \mod \ell$. Note that
\[ \tilde a_i = \langle \tilde v_2, \tilde v_i\rangle \equiv  \langle  v_2, v_i\rangle = a_i \mod \ell.\qedhere\]
\end{proof}

Now to prove the second part of (2), let $A \in \Gamma_n[\gcd(\ell,m)]$, and let $\tilde A \in \Gamma_n[m]$ be a matrix such that $\tilde A \equiv A \mod \ell$ by \autoref{Lem:y=mModl}. Therefore $A \cdot {\tilde A}^{-1} \equiv I \mod \ell$ or in other words $A \cdot {\tilde A}^{-1} \in \Gamma_n[\ell]$. So $A = (A \cdot {\tilde A}^{-1})\cdot \tilde A \in \Gamma_n[\ell] \cdot \Gamma_n[m]$.

Finally, Part (3) follows from (2) using a general isomorphism theorem:
\[ \Gamma_{n}[\gcd(\ell,m)]/ \Gamma_{n}[\ell] = (\Gamma_{n}[\ell)]\cdot  \Gamma_{n}[m])/ \Gamma_{n}[\ell]\cong \Gamma_{n}[m]/(\Gamma_{n}[\ell]\cap \Gamma_{n}[m] )= \Gamma_{n}[m]/\Gamma_{n}[\lcm(\ell,m)]\qedhere\]
\end{proof}

The following corollary is an immediate consequence of \autoref{prop:symplecticcong}, \autoref{prop:oddsymplecticcong}, and a general isomorphism theorem.

\begin{corollary} \label{cor:Gammaprod}
Let $n,m,\ell \in\N$ with $\gcd(m,\ell) = 1$. Then
 \[ \Gamma_n/\Gamma_n[m\ell] \cong \Gamma_n/\Gamma_n[m] \times \Gamma_n/\Gamma_n[\ell].\]
 \end{corollary}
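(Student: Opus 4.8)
The plan is to deduce the corollary from a purely group-theoretic isomorphism theorem, once the intersection and product formulas for congruence subgroups are in hand. Concretely, I would invoke the following standard fact: if $H$ and $K$ are normal subgroups of a group $G$ with $HK = G$, then the diagonal homomorphism $g \mapsto (gH, gK)$ induces an isomorphism $G/(H \cap K) \cong G/H \times G/K$. Its kernel is visibly $H \cap K$, so the only point needing attention is surjectivity. This follows from $HK = G$: given a target class $(aH, bK)$, write $a^{-1}b = hk$ with $h \in H$ and $k \in K$, and then $g = ah = bk^{-1}$ satisfies $gH = aH$ and $gK = bK$.

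I would then specialize with $G = \Gamma_n$, $H = \Gamma_n[m]$, and $K = \Gamma_n[\ell]$, each of which is normal in $\Gamma_n$ as the kernel of a reduction homomorphism. The hypothesis $\gcd(m,\ell) = 1$ gives $\lcm(m,\ell) = m\ell$, so $\Gamma_n[\gcd(m,\ell)] = \Gamma_n[1] = \Gamma_n$. Next I would supply the two congruence-subgroup facts: for even $n$ these are part (2) of \autoref{prop:symplecticcong}, and for odd $n$ they are part (2) of \autoref{prop:oddsymplecticcong}; in either case
\[ \Gamma_n[m] \cap \Gamma_n[\ell] = \Gamma_n[\lcm(m,\ell)] = \Gamma_n[m\ell], \qquad \Gamma_n[m]\cdot \Gamma_n[\ell] = \Gamma_n[\gcd(m,\ell)] = \Gamma_n.\]
Thus $H \cap K = \Gamma_n[m\ell]$ and $HK = G$, and the isomorphism theorem yields $\Gamma_n/\Gamma_n[m\ell] \cong \Gamma_n/\Gamma_n[m] \times \Gamma_n/\Gamma_n[\ell]$, as claimed.

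I do not expect any genuine obstacle in the assembly itself: the real content has already been discharged in establishing the product formula $\Gamma_n[m]\cdot\Gamma_n[\ell] = \Gamma_n[\gcd(m,\ell)]$, which in the odd case rests on \autoref{Lem:y=mModl}. The only new ingredient is the surjectivity half of the generic isomorphism theorem, and that is routine once $HK = G$ is known. I would therefore present this as a short citation-and-assemble argument rather than a computation.
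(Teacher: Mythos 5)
Your proposal is correct and matches the paper's own argument: the paper states \autoref{cor:Gammaprod} as an immediate consequence of \autoref{prop:symplecticcong}~(2), \autoref{prop:oddsymplecticcong}~(2), and a general isomorphism theorem, which is precisely the assembly you carry out (with the standard fact $G/(H\cap K)\cong G/H\times G/K$ when $H,K\unlhd G$ and $HK=G$ spelled out explicitly). Your identification of $H\cap K=\Gamma_n[m\ell]$ and $HK=\Gamma_n$ from $\gcd(m,\ell)=1$ is exactly the intended use of those propositions, so nothing is missing.
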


\section{Congruence subgroups of braid groups and the main theorems}

This section focuses on the congruence subgroups of braid groups and the proofs of the main theorems. We start by proving \autoref{prop:braidcong} which is the analog of \autoref{prop:symplecticcong} the braid groups, and then provide proofs of \hyperref[thm:mainresult]{Theorems \ref{thm:mainresult}} and \ref{thm:prob3.4}. Much of this section is straightforward computations and rehashing of known work. 

\subsection{Multiplicativity of congruence subgroups of braid groups}

We start by proving the analog of \autoref{prop:symplecticcong} (2) for braid groups. This is straightforward but we couldn't find it in the existing literature.

\begin{proposition}\label{prop:braidcong}
$B_{n}[\ell]\cap B_{n}[m] = B_{n}[\lcm(\ell,m)]$ and $B_{n}[\ell]\cdot B_{n}[m] = B_{n}[\gcd(\ell,m)]$.
\end{proposition}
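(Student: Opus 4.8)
The plan is to prove the two equalities separately; the intersection is immediate, and essentially all of the work lies in one inclusion of the product formula. For the intersection, observe that $b\in B_n[\ell]\cap B_n[m]$ exactly when every entry of $\rho(b)-I_n$ is divisible by both $\ell$ and $m$, equivalently by $\lcm(\ell,m)$, which is precisely the defining condition for $b\in B_n[\lcm(\ell,m)]$. For the product, the inclusion $B_n[\ell]\cdot B_n[m]\subseteq B_n[\gcd(\ell,m)]$ is immediate because both factors lie in the subgroup $B_n[\gcd(\ell,m)]$, as $\gcd(\ell,m)$ divides $\ell$ and $m$. The substance is the reverse inclusion $B_n[\gcd(\ell,m)]\subseteq B_n[\ell]\cdot B_n[m]$. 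In contrast to the matrix statement \autoref{prop:oddsymplecticcong}(2), here I must produce genuine \emph{braids}, not merely matrices, since $\rho$ is not surjective onto $\Gamma_{n-1}$; this is the main obstacle, and the generator substitution below is what resolves it.

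I would first settle the coprime case and claim that $\gcd(\ell,m)=1$ forces $B_n[\ell]\cdot B_n[m]=B_n$. The key observation is that each Artin generator maps to a unipotent matrix with a single Jordan block: writing $\rho(\sigma_i)=I_n+N_i$, the matrix $N_i$ is supported on the $2\times 2$ block $\left(\begin{smallmatrix}1&-1\\1&-1\end{smallmatrix}\right)$ and satisfies $N_i^2=0$, so that $\rho(\sigma_i)^{k}=I_n+kN_i$ and $\rho(\sigma_i)$ has order exactly $\ell$ modulo $\ell$ (and exactly $m$ modulo $m$). Given any word $b=\sigma_{i_1}^{\epsilon_1}\cdots\sigma_{i_r}^{\epsilon_r}$, choose by the Chinese Remainder Theorem an integer $k$ with $k\equiv 1\pmod{\ell}$ and $k\equiv 0\pmod{m}$ (possible since $\gcd(\ell,m)=1$), and set $\tilde b=\sigma_{i_1}^{\epsilon_1 k}\cdots\sigma_{i_r}^{\epsilon_r k}$. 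Then $\rho(\tilde b)\equiv\rho(b)\pmod{\ell}$ and $\rho(\tilde b)\equiv I_n\pmod{m}$, so $\tilde b\in B_n[m]$ while $b\tilde b^{-1}\in B_n[\ell]$, giving $b=(b\tilde b^{-1})\tilde b\in B_n[\ell]\cdot B_n[m]$. This is the braid analog of \autoref{Lem:y=mModl}, but realized by an explicit braid.

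For the general case I would reduce to the coprime case together with the trivial nested case. From the coprime product statement and the intersection formula, the reduction map $B_n/B_n[\lcm(\ell,m)]\to\prod_p B_n/B_n[p^{c_p}]$, where $p^{c_p}$ runs over the exact prime-power divisors of $\lcm(\ell,m)$, is an isomorphism: injectivity is the intersection statement and surjectivity is the coprime product statement, both applied iteratively over the primes. Under this isomorphism the subgroup $B_n[k]/B_n[\lcm(\ell,m)]$ corresponds to $\prod_p B_n[p^{a_p}]/B_n[p^{c_p}]$, where $p^{a_p}$ is the exact power of $p$ dividing $k$, since an element is $\equiv I_n\bmod k$ iff it is so modulo each $p^{a_p}$. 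Hence the product $B_n[\ell]\cdot B_n[m]$ decomposes factorwise, and within each prime-$p$ factor the two relevant subgroups are nested (being congruence subgroups for powers of a single prime), so their product is the congruence subgroup for the exponent $\min$ of the two, i.e.\ for $\gcd(\ell,m)$ locally at $p$. Reassembling the factors yields $B_n[\ell]\cdot B_n[m]=B_n[\gcd(\ell,m)]$.

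The crux, and the step I expect to be the real obstacle, is the reverse product inclusion when $\gcd(\ell,m)>1$: the clean generator substitution works only when the congruences $k\equiv 1\pmod{\ell}$ and $k\equiv 0\pmod{m}$ are simultaneously solvable, which fails precisely when $\ell$ and $m$ share a prime. Circumventing this is exactly what forces the passage through the Chinese Remainder decomposition above, relying on the fact that at a single prime the congruence subgroups $B_n[p^{j}]$ are linearly ordered by inclusion.
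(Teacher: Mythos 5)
Your proof is correct, and its engine is the same as the paper's: both hinge on the unipotence of the generators, $\rho(\sigma_i)=I_n+N_i$ with $N_i^2=0$, hence $\rho(\sigma_i)^k=I_n+kN_i$, combined with Bezout/CRT to settle the coprime case. (The paper phrases this per generator: with $md+\ell r=1$ it writes $\sigma_i=(\sigma_i^m)^d(\sigma_i^\ell)^r$ and implicitly uses that the product of two normal subgroups is a subgroup; your exponent $k\equiv 1\pmod{\ell}$, $k\equiv 0\pmod{m}$ is the same integer $md$, but you substitute it letter by letter into an arbitrary word, producing the explicit factorization $b=(b\tilde b^{-1})\tilde b$ with no appeal to that subgroup fact. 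A pleasant byproduct is that your $\tilde b$ is a genuine braid in $B_n[m]$ congruent to $b$ modulo $\ell$, i.e.\ a braid-level analog of \autoref{Lem:y=mModl}, which the paper proves only for symplectic matrices and never needs at the level of braids.) Where you genuinely diverge is the reduction of the general case to the coprime one. The paper does a direct two-factor splitting: it sets $d_m=\prod_{a_p\le b_p}p^{a_p}$ and $d_\ell=\prod_{a_p>b_p}p^{b_p}$, factors $h=h_1h_2$ with $h_1\in B_n\left[\frac{m}{d_m}\right]$ and $h_2\in B_n\left[\frac{\ell}{d_\ell}\right]$ by the coprime case, and then upgrades to $h_1\in B_n[m]$ and $h_2\in B_n[\ell]$ via the intersection formula. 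You instead decompose the whole quotient $B_n/B_n[\lcm(\ell,m)]$ as a product over primes (injectivity being the intersection formula, surjectivity the iterated coprime case together with the standard normal-subgroup CRT lifting) and compute the product of subgroups componentwise, where the linear ordering of the $B_n[p^j]$ makes each factor trivial. Both reductions rest on exactly the same two pillars; yours carries slightly more bookkeeping --- in particular the identification of $B_n[k]/B_n[\lcm(\ell,m)]$ with $\prod_p B_n[p^{a_p}]/B_n[p^{c_p}]$ needs the short lifting argument you only gesture at --- but it is more systematic, and it establishes \autoref{cor:multbraid} en route rather than deducing it afterwards as the paper does.
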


\begin{proof}
The first part is the same proof as in \autoref{prop:symplecticcong} (2):

$h\in B_n[\lcm(\ell,m)]$ if and only if $\rho(h)=I+X$ such that all entries of $X$ are divisible by $\lcm(\ell,m)$ if and only if $\rho(h)=I+X$ such that all entries of $X$ are divisible by $m$ and $\ell$ if and only if $h\in B_n[\ell]\cap B_n[m]$.

For the second part, we first assume that $\gcd(\ell,m)=1$. 
Because
\[ \begin{pmatrix} 2&-1\\1&0\end{pmatrix}^m = \begin{pmatrix} m+1&-m\\m&-m+1\end{pmatrix},\]
we get that $\rho(\sigma_i^m) $ is the identity modulo $m$ and thus $\sigma_i^m \in B_n[m]$.
By Bezout's identity there exists $d,r\in \mathbb{Z}$ so that $md+\ell r=1$.
Then $\sigma_i=\sigma_i^{md+\ell r}=(\sigma_i^m)^d\cdot (\sigma_i^\ell)^r\in  B_n[m]\cdot B_n[\ell]$, showing that $B_n\subseteq B_n[m]\cdot B_n[\ell]$.

For the general case, let 
\[ m = \prod_{p\text{ prime}} p^{a_p}\quad\text{and}\quad\ell = \prod_{p\text{ prime}} p^{b_p}. \quad\text{Then}\quad \gcd(m,\ell) =\prod_{p\text{ prime}} p^{\min(a_p,b_p)}. \]
We now define
\[ d_m = \prod_{\substack{p\text{ prime}\\a_p\le b_p}} p^{a_p}\quad\text{and}\quad d_\ell = \prod_{\substack{p\text{ prime}\\a_p>b_p}} p^{b_p}.\]
Let $h\in B_n[\gcd(m,\ell)]$. Because 
\[\gcd\left(\frac{m}{d_m},\frac{\ell}{d_\ell}\right) = 1,\]
we can write $h=h_1 \cdot h_2$ with $h_1 \in B_n[\frac{m}{d_m}]$ and $h_2 \in B_n[\frac{\ell}{d_\ell}]$. As both $h,h_1 \in B_n[d_\ell]$, so is $h_2$. Therefore, 
\[h_2 \in B_n\left[\frac{\ell}{d_\ell}\right] \cap B_n[d_\ell] = B_n\left[\lcm\left(\frac{\ell}{d_\ell},d_\ell\right)\right] = B_n[\ell].\]
Likewise, $h_1\in B_n[m]$.
\end{proof}

As for the symplectic groups, this immediately implies the following multiplicativity result.

\begin{corollary}\label{cor:multbraid} Let $n,m,\ell \in\N$ with $\gcd(m,\ell) = 1$. Then
 \[ B_n/B_n[m\ell] \cong B_n/B_n[m] \times B_n/B_n[\ell].\]
 \end{corollary}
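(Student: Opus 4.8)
The plan is to deduce this directly from \autoref{prop:braidcong} via the standard Chinese-remainder / internal-product argument for groups, exactly as one does for the symplectic analog \autoref{cor:Gammaprod}. Since $\gcd(m,\ell)=1$ we have $\lcm(m,\ell)=m\ell$, so \autoref{prop:braidcong} supplies the two inputs I need: the intersection formula $B_n[m]\cap B_n[\ell]=B_n[m\ell]$ and the product formula $B_n[m]\cdot B_n[\ell]=B_n[1]=B_n$.

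First I would form the homomorphism
\[ \phi\colon B_n \longrightarrow B_n/B_n[m] \times B_n/B_n[\ell], \qquad b \longmapsto \big(b\,B_n[m],\, b\,B_n[\ell]\big), \]
which is well-defined and a homomorphism because $B_n[m]$ and $B_n[\ell]$ are normal (being kernels of the reductions of $\rho$) and $\phi$ is the product of the two quotient maps. Its kernel is $B_n[m]\cap B_n[\ell]=B_n[m\ell]$, so the first isomorphism theorem already yields an injection $B_n/B_n[m\ell]\hookrightarrow B_n/B_n[m]\times B_n/B_n[\ell]$. It then remains to check that $\phi$ is surjective. Given a target pair $\big(a\,B_n[m],\,b\,B_n[\ell]\big)$, I would use the product formula to write $a^{-1}b=hk$ with $h\in B_n[m]$ and $k\in B_n[\ell]$, and then verify that $c:=ah$ maps correctly: indeed $c\,B_n[m]=a\,B_n[m]$ since $h\in B_n[m]$, and $b^{-1}c=b^{-1}ah=k^{-1}\in B_n[\ell]$ so that $c\,B_n[\ell]=b\,B_n[\ell]$. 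Hence $\phi$ is onto and the claimed isomorphism follows.

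The only step with any content is surjectivity, and this is precisely where the nontrivial half of \autoref{prop:braidcong} (the product formula $B_n[m]\cdot B_n[\ell]=B_n$, itself resting on the B\'ezout/coprimality argument for $\sigma_i^m$ and $\sigma_i^\ell$) is consumed; the intersection formula only accounts for the kernel. So the corollary is genuinely immediate once \autoref{prop:braidcong} is in hand — equivalently, one may phrase it as the observation that $B_n[m]\cap B_n[\ell]=B_n[m\ell]$ together with $B_n[m]\,B_n[\ell]=B_n$ exhibits $B_n/B_n[m\ell]$ as the internal direct product of $B_n[m]/B_n[m\ell]\cong B_n/B_n[\ell]$ and $B_n[\ell]/B_n[m\ell]\cong B_n/B_n[m]$.
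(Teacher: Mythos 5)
Your proof is correct and is exactly the argument the paper intends: the paper states the corollary as an immediate consequence of \autoref{prop:braidcong}, and your write-up simply spells out the standard Chinese-remainder argument (kernel from the intersection formula, surjectivity from the product formula) that makes ``immediate'' precise. Nothing is missing, and the details (including the internal direct product reformulation via the second isomorphism theorem) check out.
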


\subsection{Surjections $B_n[\ell] \to \Gamma_{n-1}[\ell]$}

In this section, we shall see that $B_n[\ell] \to \Gamma_{n-1}[\ell]$ is a surjection if $\ell$ is even. This follows from work of A'Campo \cite{ACampo} and Brendle--Margalit \cite{BM18} for $n\ge 5$, and can easily be extended to all $n$, which we do here.

\begin{proposition}[Brendle--Margalit \cite{BM18}, Theorem 3.3] \label{lem:BM}
For $n\geq 3$, the restriction of $\rho$ to the pure braid group  $\rho\colon  PB_n=B_n[2]\rightarrow \Gamma_{n-1}[2]$ is surjective.
\end{proposition}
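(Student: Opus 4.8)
One inclusion is immediate. Since $PB_n=B_n[2]$ consists precisely of the braids whose Burau image is congruent to $I_n$ modulo $2$, and every such image already lies in $\Gamma_{n-1}$ by the discussion in \autoref{sec:backgroundsubgroups}, we get $\rho(PB_n)\subseteq \Gamma_{n-1}[2]$ straight from the definitions. The entire content of the proposition is therefore the reverse inclusion $\Gamma_{n-1}[2]\subseteq \rho(PB_n)$, i.e.\ the surjectivity.

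The plan is to produce generators of $\Gamma_{n-1}[2]$ inside the image. Recall that $PB_n$ is generated by the standard elements $A_{ij}$ ($1\le i<j\le n$), each a conjugate in $B_n$ of some $\sigma_k^2$. A direct look at the matrices shows that $\rho(\sigma_k)$ acts as a symplectic transvection with respect to the form $\langle-,-\rangle$ (this is most transparent in the reduced representation $\bar\rho$ of \autoref{rmk:composition}, where $\bar\rho(\sigma_k)$ is visibly a transvection). Hence each $\rho(A_{ij})$ is the square $T_v^2$ of a transvection along an explicit root $v$, and $\rho(PB_n)$ is exactly the subgroup these transvection-squares generate. The reverse inclusion thus becomes the purely symplectic assertion that this distinguished family of $T_v^2$ generates the full level-$2$ congruence subgroup $\Gamma_{n-1}[2]$.

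For $n\ge 5$ this generation statement is supplied by A'Campo \cite{ACampo} together with Brendle--Margalit \cite[Theorem 3.3]{BM18}, which I would invoke directly. It then remains to treat the base cases $n=3,4$ by hand. For $n=3$, \autoref{lem:Gamma} gives $\Gamma_2\cong\SL_2(\Z)$ with $\Gamma_2[2]$ the classical level-$2$ subgroup $\Gamma(2)$, and for odd $n$ the reduced representation agrees with the unreduced one up to the isomorphism $\psi$ of \autoref{rmk:composition}, so no information is lost by passing to $\bar\rho$. There the generators $\rho(\sigma_1^2),\rho(\sigma_2^2)$ become the shears $\left(\begin{smallmatrix}1&2\\0&1\end{smallmatrix}\right)$ and $\left(\begin{smallmatrix}1&0\\-2&1\end{smallmatrix}\right)$, while a short computation shows that a suitable product of $\rho(A_{13})$ with these two equals $-I$. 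Since $\Gamma(2)$ is generated by the two shears together with $-I$, this gives surjectivity for $n=3$. For $n=4$ one repeats the argument inside $\Gamma_3\cong[\Sp_4(\Z)]_{e_1}$, verifying directly that the six transvection-squares $\rho(A_{ij})$ generate $\Gamma_3[2]$.

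The main obstacle is the symplectic generation itself: that the transvection-squares actually arising from braids, as opposed to an arbitrary spanning family of roots, already generate all of $\Gamma_{n-1}[2]$. For $n\ge 5$ this is delegated to \cite{ACampo,BM18}, so the genuinely new work is the explicit $n=4$ computation. Two points there require care: tracking the stabilizer condition (fixing $e_1$) that cuts $\Gamma_3$ out of $\Sp_4(\Z)$, and---since $n=4$ is even---realizing at level $2$ the extra transvection in the isotropic direction $v_n$ that generates the kernel $\Z$ of $\Gamma_{n-1}\to\Gamma'_{n-1}$ in \autoref{lem:SESGammaodd}, which is invisible to the reduced representation.
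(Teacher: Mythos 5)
Your overall framework matches the paper's: the content of the proposition is surjectivity, which reduces to showing that the transvection squares $\rho(A_{ij})$ coming from the standard pure-braid generators generate $\Gamma_{n-1}[2]$; for $n\ge 5$ this is exactly Brendle--Margalit's Theorem 3.3, and your $n=3$ argument is correct and complete: the shears $\bar\rho(\sigma_1^2)=\left(\begin{smallmatrix}1&2\\0&1\end{smallmatrix}\right)$, $\bar\rho(\sigma_2^2)=\left(\begin{smallmatrix}1&0\\-2&1\end{smallmatrix}\right)$ together with $-I$ (the image of the full twist $A_{12}A_{13}A_{23}$) generate the classical $\Gamma(2)$, which is in substance the same as the paper's appeal to Mumford's generating set $\tau^2_{\vec x_1}, \tau^2_{\vec y_1}, \tau^2_{\vec x_1+\vec y_1}$ of $\Gamma_2[2]$.

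The genuine gap is the case $n=4$. You write that one ``repeats the argument inside $\Gamma_3\cong[\Sp_4(\Z)]_{e_1}$, verifying directly that the six transvection-squares $\rho(A_{ij})$ generate $\Gamma_3[2]$,'' but that verification \emph{is} the proposition for $n=4$, and it is not a finite check: $\Gamma_3[2]$ is an infinite group, so proving that six explicit matrices generate it requires a structural argument, not a computation. You even flag the two delicate points yourself --- the stabilizer condition cutting $\Gamma_3$ out of $\Sp_4(\Z)$, and the transvection in the isotropic direction generating the kernel $\Z$ of $\Gamma_3\to\Gamma'_3$, which the reduced representation cannot see --- but you supply no mechanism for handling either. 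The paper closes exactly this hole by observing that Brendle--Margalit's proof of their Proposition 3.2, which reduces generation of $\Gamma_{2g+1}[2]$ to generation of $\Gamma_{2g}[2]$, goes through verbatim for $g=1$: reducing $\Gamma_3[2]$ to $\Gamma_2[2]$ (where Mumford's theorem applies, after replacing $\tau^2_{\vec x_1+\vec y_1}$ by its conjugate $\tau^2_{\vec x_1-\vec y_1}=\tau^2_{\vec x_1}\tau^2_{\vec x_1+\vec y_1}\tau^{-2}_{\vec x_1}$) yields the explicit generating set $\tau^2_{\vec x_1}, \tau^2_{\vec y_1}, \tau^2_{\vec x_1-\vec y_1}, \tau^2_{\vec y_2-\vec x_1}, \tau^2_{\vec y_2-\vec y_1}, \tau^2_{\vec y_2}$ of $\Gamma_3[2]$, whose preimages in $B_4[2]$ are the ones already exhibited in Brendle--Margalit's proof of their Theorem 3.3. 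Without this reduction step, or some substitute for it, your $n=4$ case is an assertion rather than an argument.
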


\begin{proof}
Brendle--Margalit \cite{BM18} prove this result for $n\geq 5$, however their proof can easily seen to work also for $n=3,4$. For completeness we will include the details here.

In Proposition 3.2 \cite{BM18}, they find generating sets for $\Gamma_{n-1}[2]$ for $n\ge 5$. In the proof of their Theorem 3.3, they then find preimages of each generator in $B_n[2]$. In the following paragraph, we will observe that the proof of their Proposition 3.2 also works for $n=3,4$. The preimages they find in the proof of their Theorem 3.3 then also work as preimages of the given generators.

Let us first look at the case $n=3$. By Mumford \cite[Proposition A.3, p.207]{Mumford}, $\Gamma_2[2]$ is generated by (in the notation of Brendle--Margalit)
\[ \tau^2_{\vec x_1}, \tau^2_{\vec y_1}, \tau^2_{\vec x_1+\vec y_1}.\]
Brendle--Margalit then replace $\tau^2_{\vec x_1+\vec y_1}$ with
\[ \tau^2_{\vec x_1-\vec y_1} = \tau^2_{\vec x_1}\tau^2_{\vec x_1+\vec y_1}\tau^{-2}_{\vec x_1}.\]
We see that all generators from their Proposition 3.2 (that make sense for $n=3$) still form a generating set of $\Gamma_2[2]$. 

They proceed by reducing $\Gamma_{2g+1}[2]$ to $\Gamma_{2g}[2]$. This works exactly the same way by reducing $\Gamma_3[2]$ to $\Gamma_2[2]$ and we get the generators
\[ \tau^2_{\vec x_1}, \tau^2_{\vec y_1}, \tau^2_{\vec x_1-\vec y_1}, \tau^2_{\vec y_2-\vec x_1}, \tau^2_{\vec y_2-\vec y_1}, \tau^2_{\vec y_2} \]
for $\Gamma_3[2]$. Again, these are exactly the generators described in their Proposition 3.2.
\end{proof}

The following direct corollary is well known to experts, and is stated by Margalit  \cite{margalitproblems} as a corollary of the work of A'Campo \cite{ACampo} in the case that $n$ is odd.

\begin{corollary}\label{cor:imageelleven}
For $n\geq 3$, the restriction of $\rho$ to $\rho\colon  B_n[\ell]\rightarrow \Gamma_{n-1}[\ell]$ is surjective if $\ell$ is even.
\end{corollary}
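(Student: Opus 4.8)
The plan is to deduce surjectivity at even level $\ell$ directly from the level-$2$ case established in \autoref{lem:BM}, using only the defining congruence condition on $B_n[\ell]$. The key observation is that, because $\ell$ is even, the reduction map $\GL_n(\Z)\to\GL_n(\Z/2\Z)$ factors through $\GL_n(\Z/\ell\Z)$; consequently a matrix that is congruent to the identity modulo $\ell$ is also congruent to the identity modulo $2$, which yields the containment $\Gamma_{n-1}[\ell]\subseteq\Gamma_{n-1}[2]$. The same remark applies on the braid side via the definition $B_n[\ell]=\{b\in B_n\mid \rho(b)\equiv I_n \bmod\ell\}$.

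The argument then proceeds in a single step. First I would fix an arbitrary $A\in\Gamma_{n-1}[\ell]$; by the containment above, $A\in\Gamma_{n-1}[2]$. Applying \autoref{lem:BM}, which gives surjectivity of $\rho\colon B_n[2]\to\Gamma_{n-1}[2]$ for $n\ge 3$, I would produce some $b\in B_n[2]$ with $\rho(b)=A$. The crucial point is that this preimage automatically lies in the smaller congruence subgroup: since $\rho(b)=A\equiv I_n\bmod\ell$, the defining condition for $B_n[\ell]$ is satisfied, so $b\in B_n[\ell]$. Thus $A=\rho(b)$ lies in the image of $B_n[\ell]$ under $\rho$, proving surjectivity.

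There is essentially no serious obstacle here, as the entire weight of the result rests on \autoref{lem:BM}; the reduction from even $\ell$ to the case $\ell=2$ is elementary and relies only on divisibility of $\ell$ by $2$ together with the definition of the congruence subgroups. The only thing worth stating carefully is that one does not need to separately arrange the preimage to lie in $B_n[\ell]$—this is forced by the equation $\rho(b)=A$ once $A\equiv I_n\bmod\ell$. I would therefore keep the proof short, emphasizing the containment $\Gamma_{n-1}[\ell]\subseteq\Gamma_{n-1}[2]$ and the automatic membership $b\in B_n[\ell]$, and cite \autoref{lem:BM} for the level-$2$ surjectivity.
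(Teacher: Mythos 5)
Your proof is correct and is essentially identical to the paper's own argument: both take $A\in\Gamma_{n-1}[\ell]\subseteq\Gamma_{n-1}[2]$, lift it to $b\in B_n[2]$ via \autoref{lem:BM}, and observe that $\rho(b)=A\equiv I_n\bmod\ell$ forces $b\in B_n[\ell]$. Nothing to add.
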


\begin{proof}
Let $x\in \Gamma_{n-1}[\ell] \le \Gamma_{n-1}[2]$. By  \autoref{lem:BM} (Theorem 3.3 of Brendle--Margalit \cite{BM18}), there is a $y \in B_n[2]$ that maps to $x$. Because $x \equiv I \mod \ell$, $y$ has to even be in $B_n[\ell]$.
\end{proof}

The case $n=3$ of \autoref{lem:BM} can be proved by direct computation, which we single out in the following lemma for later use.

\begin{lemma}\label{lem:n=3}
$\rho\colon B_2 \to \Gamma_1$ and $\rho\colon  B_3 \to \Gamma_2$ are surjective. 
\end{lemma}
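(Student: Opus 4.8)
The plan is to prove surjectivity of $\rho\colon B_2 \to \Gamma_1$ and $\rho\colon B_3 \to \Gamma_2$ by explicitly identifying both the source and target groups and checking directly that the generating Artin matrices already generate the full target. First I would handle the target groups using the identifications from the earlier lemmas: by \autoref{lem:Gamma}, $\Gamma_1 \cong [\Sp_2(\Z)]_{e_1} \cong \Z$ and $\Gamma_2 \cong \Sp_2(\Z) = \SL_2(\Z)$. So the statement reduces to showing that the image of $\rho$ is all of $\Z$ in the first case and all of $\SL_2(\Z)$ in the second.

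For $B_2$, the braid group is infinite cyclic, generated by $\sigma_1$, and $\rho(\sigma_1) = \begin{psmallmatrix} 2 & -1 \\ 1 & 0\end{psmallmatrix}$. I would show that this matrix, viewed inside $\Gamma_1$, is a generator of the infinite cyclic group $\Gamma_1 \cong \Z$; equivalently, I would identify the isomorphism $\Gamma_1 \cong \Z$ explicitly (for instance via the entry that records the unipotent shear fixing $e_1$) and verify that $\rho(\sigma_1)$ maps to $\pm 1$. Since $\rho(\sigma_1^k)$ runs over all powers, surjectivity onto $\Z$ is immediate once the generator is pinned down.

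For $B_3$, I would use the standard fact that $\SL_2(\Z)$ is generated by two elements, e.g. $S = \begin{psmallmatrix} 0 & -1 \\ 1 & 0\end{psmallmatrix}$ and $T = \begin{psmallmatrix} 1 & 1 \\ 0 & 1\end{psmallmatrix}$, or equivalently by the two elementary/unipotent generators. Under the isomorphism $\Gamma_2 \cong \SL_2(\Z)$ it suffices to compute the images $\rho(\sigma_1)$ and $\rho(\sigma_2)$ as $2\times 2$ matrices in the symplectic basis from the proof of \autoref{lem:Gamma} (the basis $a_1, b_1$) and exhibit a word in these two matrices equal to each of $S$ and $T$. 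Concretely, $\rho(\sigma_1)$ and $\rho(\sigma_2)$ are conjugate $3\times 3$ matrices whose restrictions to the maximal symplectic summand $W$ are the standard generators of $\SL_2(\Z)$; I would simply carry out the basis change and read off that the two restricted matrices generate $\SL_2(\Z)$ (this is the classical fact that the two parabolic generators, together, generate the modular group).

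The main obstacle, though it is a mild one, is bookkeeping rather than mathematics: one must correctly translate the $3\times 3$ matrices $\rho(\sigma_i)$ into their $2\times 2$ action on the symplectic quotient/summand via the change of basis $\{a_i, b_i\}$ used in \autoref{lem:Gamma}, so that the comparison with the standard generators of $\SL_2(\Z)$ is transparent. Since everything is a finite explicit computation in small dimension, no genuine difficulty arises — the work is in presenting the basis change cleanly so that the claim "$\rho(\sigma_1), \rho(\sigma_2)$ generate $\SL_2(\Z)$" becomes a one-line verification against the known generating set of the modular group.
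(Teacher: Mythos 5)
Your proposal is correct and takes essentially the same route as the paper: for $B_2$ both arguments simply observe that $\rho$ identifies two infinite cyclic groups, and for $B_3$ both reduce, via the change of basis to the symplectic summand $W$ from \autoref{lem:Gamma} (which the paper packages as the reduced Burau representation $\bar\rho$ together with \autoref{rmk:composition}), to the classical fact that the two resulting parabolic/unipotent $2\times 2$ matrices generate $\SL_2(\Z)$. The only cosmetic difference is that you carry out the basis change by hand in the basis $a_1,b_1$, whereas the paper reads the same matrices off the already-computed formulas for $\bar\rho(\sigma_1)$ and $\bar\rho(\sigma_2)$.
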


\begin{proof}
For $\rho\colon B_2 \to \Gamma_1$, we observe that this map is actually an isomorphism and both groups are isomorphic to $\Z$.

For $n=3$, we show that 
\[\bar{\rho}\colon B_3 \to \Gamma'_2 = \Sp_2(\Z) = \SL_2(\Z)\cong \Gamma_2\]
 is surjective and conclude surjectivity of $\rho$ from \autoref{rmk:composition}. 
  The two matrices
\[ \bar{\rho}(\sigma_1)=\begin{pmatrix} 1&1\\0&1\end{pmatrix}\quad\text{and}\quad\bar{\rho}(\sigma_2)=\begin{pmatrix} 1&0\\-1&1\end{pmatrix}\]
 generate $\SL_2(\Z)$.
\end{proof}

\subsection{Quotients of braid groups by their congruence subgroups}

In this section, prove \autoref{thm:mainresult}.

\begin{proof}[Proof of \autoref{thm:mainresult}]
The cases $n=2,3$ follow immediately from \autoref{lem:n=3}. We now assume $n\ge 4$.

First, we note that
\[ B_n[\ell]/B_n[\ell m] \longrightarrow \Gamma_{n-1}[\ell]/\Gamma_{n-1}[\ell m]\]
is always injective. This can easily be seen as $B_n[\ell]$ maps to $\Gamma_{n-1}[\ell]$ through $\rho$ and $B_n[\ell m]$ is the kernel of $B_n[\ell] \to \Gamma_{n-1}[\ell]/\Gamma_{n-1}[\ell m]$.

We next observe that
\[ B_n[\ell]/B_n[\ell m] \longrightarrow \Gamma_{n-1}[\ell]/\Gamma_{n-1}[\ell m]\]
is an isomorphism if $\ell$ is even, because $B_n[\ell] \to \Gamma_{n-1}[\ell]$ is surjective by \autoref{cor:imageelleven}.

Furthermore, we want to show that
\[ B_n[\ell]/B_n[\ell m] \longrightarrow \Gamma_{n-1}[\ell]/\Gamma_{n-1}[\ell m]\]
is an isomorphism if $\ell m$ is odd. For this consider the following commutative diagram.
\[\begin{tikzcd}
B_n[2\ell]/B_n[2\ell m]\arrow[d] \arrow[r]&B_n[\ell]/B_n[\ell m]\arrow[d]\\
\Gamma_{n-1}[2\ell]/\Gamma_{n-1}[2\ell m]\arrow[r] & \Gamma_{n-1}[\ell]/\Gamma_{n-1}[\ell m] 
\end{tikzcd}
\]
In this diagram, the left downwards map is an isomorphism as we have just seen. Both across maps are also isomorphisms, which follows from \autoref{prop:symplecticcong} (2), \autoref{prop:oddsymplecticcong} (2), \autoref{prop:braidcong}, and a general isomorphism theorem. This proves that the right downwards map is also an isomorphism.

In the final case, $\ell$ is odd and $m$ is even. Then $B_n[\ell]/B_n[\ell m]$ of course is an extension of $B_n[2\ell]/B_n[\ell m] \cong \Gamma_{n-1}[2\ell]/\Gamma_{n-1}[\ell m]$ and $B_n[\ell]/B_n[2\ell]\cong B_n/B_n[\ell] \cong S_n$.

Putting all of these cases together and using \autoref{cor:Gammaprod} and \autoref{cor:multbraid}, proves 
   \[B_{n}/B_{n}[\ell]\cong \begin{cases}
       \Gamma_{n-1}/\Gamma_{n-1}[\ell] &\text{ for $n=2,3$,}\\
        B_n/B_n[2^k] \times\Gamma_{n-1}/\Gamma_{n-1}[m]  &\text{ for $n\geq 4$},
        \end{cases}
\]
for $\ell=2^{k}m$ with $m$ odd.

%
%
%
%
%

Let us now prove the statement about $B_n/B_n[2^k]$.
Consider the following inclusion of short exact sequences induced by $\rho$.
\begin{equation*}
\begin{tikzcd}
 1 \arrow[r]&  B_n[2]/B_n[2^k]\arrow[d,hook]\arrow[r] &  B_n/B_n[2^k]\arrow[d,hook] \arrow[r]  &B_n/B_n[2]\arrow[d,hook]\arrow[r]&  1 \\
1 \arrow[r]& \Gamma_{n-1}[2]/\Gamma_{n-1}[2^k]\arrow[r] &  \Gamma_{n-1}/\Gamma_{n-1}[2^k] \arrow[r]  &\Gamma_{n-1}/\Gamma_{n-1}[2]\arrow[r]&  1 
\end{tikzcd}\label{SES}
\end{equation*}
To prove the second part of the statement, we only need to find the image of 
\[\begin{tikzcd}B_n/B_n[2^k] \arrow[r,hook] &\Gamma_{n-1}/\Gamma_{n-1}[2^k].\end{tikzcd}\]
Clearly, the image is a subset of the preimage of the image of
\[\begin{tikzcd}B_n/B_n[2] \arrow[r,hook] &\Gamma_{n-1}/\Gamma_{n-1}[2].\end{tikzcd}\]
This gives us the morphism of short exact sequences
\begin{equation*}
\begin{tikzcd}
 1 \arrow[r]&  B_n[2]/B_n[2^k]\arrow[d,"\cong"]\arrow[r] &  B_n/B_n[2^k]\arrow[d] \arrow[r]  &B_n/B_n[2]\arrow[d,"\cong"]\arrow[r]&  1 \\
1 \arrow[r]& \Gamma_{n-1}[2]/\Gamma_{n-1}[2^k]\arrow[r] &  X\arrow[r]  &S_n\arrow[r]&  1 
\end{tikzcd}
\end{equation*}
where $X$ denotes the preimage of $S_n$ in $\Gamma_{n-1}/\Gamma_{n-1}[2^k]$. The proof of \autoref{thm:mainresult} explains that $B_n[2]/B_n[2^k]\to \Gamma_{n-1}[2]/\Gamma_{n-1}[2^k]$ is an isomorphism and Arnol\cprime{}d \cite{A68} proves that $B_n/B_n[2] \to S_n$ is an isomorphism. The five lemma then implies that $B_n/B_n[p^k] \to X$ is an isomorphism. 

This also implies the first statement, with the exception of noting that this extension does not split.  We conclude this from the following commutative diagram and the fact that $B_n/B_n[4]$ is a non-split extension as proven in Proposition 8.6 of \cite{KM19}. (They call denote $B_n/B_n[4]$ by $\mathcal Z_n$.)
\[\begin{tikzcd}
\Gamma_{n-1}[2]/\Gamma_{n-1}[2^k]\arrow[r,hook]\arrow[dd,two heads] & B_n/B_n[2^k]\arrow[dr,two heads]\arrow[dd,two heads] & \\
 & & S_n \\
 \Gamma_{n-1}[2]/\Gamma_{n-1}[4]\arrow[r,hook] & Z_n\arrow[ur,two heads] &
\end{tikzcd}\]
The commutative diagram comes from the quotient map
\[ \begin{tikzcd} B_n/B_n[2^k] \arrow[r,two heads] &B_n/B_n[4].\end{tikzcd}\]
If $B_n/B_n[2^k] \to S_n$ had a splitting $s \colon  S_n \to B_n/B_n[2^k]$, the composition
\[ S_n \stackrel{s}{\longrightarrow} B_n/B_n[2^k] \longrightarrow B_n/B_n[4]\]
would be a splitting of $B_n/B_n[4] \to S_n$, which contradicts Proposition 8.6 of \cite{KM19}.
\end{proof}

\subsection{Image of $B_n[\ell]$ in $\GL_n(\Z)$}

In this section, we prove \autoref{thm:prob3.4}.

\begin{proof}[Proof of \autoref{thm:prob3.4}]
For $n=3$,  \autoref{lem:n=3} proves that the image of $B_3[\ell] \to \Gamma_2$ is simply $\Gamma_2[\ell]$.

For $\ell$ even, \autoref{cor:imageelleven} implies that the image of $B_n[\ell] \to \Gamma_{n-1}$ is $\Gamma_{n-1}[\ell]$.

Let $n\ge 4$ and $\ell$ odd. Consider following the map of short exact sequences.
\begin{equation*}
\begin{tikzcd}
 1 \arrow[r]&  B_n[2\ell]\arrow[d,two heads]\arrow[r] &  B_n[\ell]\arrow[d] \arrow[r]  &S_n\arrow[d,hook]\arrow[r]&  1 \\
1 \arrow[r]& \Gamma_{n-1}[2\ell]\arrow[r] &  \Gamma_{n-1}[\ell]\arrow[r]  &\Gamma_{n-1}[\ell]/\Gamma_{n-1}[2\ell]\arrow[r]&  1 
\end{tikzcd}
\end{equation*}
We want to prove that the image of $B_n[\ell] \to \Gamma_{n-1}[\ell]$ is the preimage of $S_n \subset \Gamma_{n-1}[\ell]/\Gamma_{n-1}[\ell]$ via the quotient map. The image is contained in the preimage from the above commutative diagram. Now let $x\in \Gamma_{n-1}[\ell]$ be in the preimage, i.e.\ $x$ maps to a permutation $y\in S_n \subset \Gamma_{n-1}[\ell]/\Gamma_{n-1}[\ell]$ via the quotient map. Let $z \in B_n[\ell]$ be a preimage of $y$, then the difference $\rho(z)^{-1}x$ lies in $\Gamma_{n-1}[2\ell]$. Let $w\in B_n[2\ell]$ map to $\rho(z)^{-1}x$ via $\rho$. Then $z\cdot w \in B_n[\ell]$ maps to $x$ via $\rho$ as
\[ \rho(z \cdot w) = \rho(z) \cdot \rho(w) = \rho(z) \cdot \rho(z)^{-1}x = x.\qedhere\]
\end{proof}

\subsection{Reduced congruence subgroups and analogs to the main theorems}

In this final section, we want to consider the reduced integral Burau representation instead of the unreduced one and prove analogs of our main theorems. For this sake, define
\[ \Gamma'_{n-1}[\ell] = \{ A\in \Gamma'_{n-1} \mid A \equiv I_{n-1} \mod\ell\} \unlhd \Gamma'_{n-1}\]
and
\[ B'_n[\ell] := \ker( B_n \stackrel{\bar \rho}{\longrightarrow} \Gamma'_{n-1} \longrightarrow \Gamma'_{n-1}/\Gamma'_{n-1}[\ell] ).\]

The analog of \autoref{thm:mainresult} is the following theorem.

\begin{theorem}
For an integer $\ell=2^{k}m$ with $m$ odd,
   \[B_{n}/B'_{n}[\ell]\cong \begin{cases}
       \Gamma'_{n-1}/\Gamma'_{n-1}[\ell] &\text{ for $n=2,3$,}\\
        B_n/B'_n[2^k] \times\Gamma'_{n-1}/\Gamma'_{n-1}[m]  &\text{ for $n\geq 4$}.
        \end{cases}
\]
And for $n\ge4$, $B_n/B'_n[2^k]$ is trivial for $k=0$, isomorphic to $S_n$ for $k=1$, and for $k\ge2$ it is the  extension of $S_n$ by $\Gamma'_{n-1}[2]/\Gamma'_{n-1}[2^k]$ given by the preimage of $S_n\le \Gamma'_{n-1}/\Gamma'_{n-1}[2]$ in $\Gamma'_{n-1}/\Gamma'_{n-1}[2^k]$ via the quotient map. 
\end{theorem}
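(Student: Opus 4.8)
The plan is to mirror the proof of \autoref{thm:mainresult} almost verbatim, replacing the unreduced objects $\Gamma_{n-1}$, $\Gamma_{n-1}[\ell]$, $B_n[\ell]$ by their reduced counterparts $\Gamma'_{n-1}$, $\Gamma'_{n-1}[\ell]$, $B'_n[\ell]$. The key observation that lets us recycle the earlier argument is \autoref{rmk:composition}: since $\bar\rho = \psi\circ\rho$ with $\psi\colon \Gamma_{n-1}\to\Gamma'_{n-1}$, we have $B'_n[\ell] = \ker(B_n\to \Gamma'_{n-1}/\Gamma'_{n-1}[\ell])$ expressed in terms of the same braid group data, so the multiplicativity results (\autoref{prop:braidcong}, \autoref{cor:multbraid}) apply unchanged to $B_n$. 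First I would establish the analog of \autoref{prop:symplecticcong}/\autoref{prop:oddsymplecticcong} for $\Gamma'_{n-1}$: that $\Gamma'_{n-1}[\ell]\cap\Gamma'_{n-1}[m]=\Gamma'_{n-1}[\lcm(\ell,m)]$ and $\Gamma'_{n-1}[\ell]\cdot\Gamma'_{n-1}[m]=\Gamma'_{n-1}[\gcd(\ell,m)]$, together with $\Gamma'_n/\Gamma'_n[\ell]\cong \Gamma'_n(\Z/\ell\Z)$ (surjectivity of reduction mod $\ell$). For odd $n$ this is immediate since $\Gamma'_{2g}=\Sp_{2g}(\Z)$ by \autoref{lem:Gamma'}; for even $n$ one uses the semidirect product description $\Gamma'_{2g-1}\cong \Sp_{2g-2}(\Z)\ltimes\Z^{2g-2}$ and checks the congruence statements componentwise, which is routine.

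Next I would establish the reduced analog of \autoref{cor:imageelleven}: that $\bar\rho\colon B'_n[\ell]\to\Gamma'_{n-1}[\ell]$ is surjective for even $\ell$. This follows by pushing \autoref{cor:imageelleven} through the map $\psi$: since $\rho\colon B_n[\ell]\to\Gamma_{n-1}[\ell]$ is surjective for even $\ell$ and $\psi$ maps $\Gamma_{n-1}[\ell]$ onto $\Gamma'_{n-1}[\ell]$ (the surjection of \autoref{lem:SESGammaodd} respects the congruence filtration, as its kernel $\Z$ sits inside $\Gamma_{2g-1}[\ell]$ for the relevant $\lambda$), composing gives surjectivity of $\bar\rho$ onto $\Gamma'_{n-1}[\ell]$. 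With this in hand, the core argument proceeds exactly as before: the map $B'_n[\ell]/B'_n[\ell m]\to \Gamma'_{n-1}[\ell]/\Gamma'_{n-1}[\ell m]$ is injective by definition of the congruence subgroups, it is an isomorphism when $\ell$ is even (by the reduced \autoref{cor:imageelleven}), and one bootstraps to the case $\ell m$ odd using the commutative square comparing the $2\ell$-level and $\ell$-level quotients together with the reduced multiplicativity. The remaining case $\ell$ odd, $m$ even is handled by the extension with quotient $B_n/B'_n[2\ell]$ fitting over $S_n$. Assembling via \autoref{cor:multbraid} and the reduced \autoref{cor:Gammaprod} yields the displayed product decomposition.

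For the statement about $B_n/B'_n[2^k]$, I would run the same five-lemma comparison of short exact sequences
\[
\begin{tikzcd}
1\arrow[r]& B'_n[2]/B'_n[2^k]\arrow[d]\arrow[r]& B_n/B'_n[2^k]\arrow[d]\arrow[r]& B_n/B'_n[2]\arrow[d]\arrow[r]& 1\\
1\arrow[r]& \Gamma'_{n-1}[2]/\Gamma'_{n-1}[2^k]\arrow[r]& X'\arrow[r]& S_n\arrow[r]& 1
\end{tikzcd}
\]
where $X'$ is the preimage of $S_n\le \Gamma'_{n-1}/\Gamma'_{n-1}[2]$. The left vertical map is an isomorphism by the reduced version of the $\ell$ even case, the right vertical map is Arnol\cprime{}d's isomorphism $B_n/B_n[2]\cong S_n$ (which factors through $B_n/B'_n[2]$ since $B_n[2]\subseteq B'_n[2]$ by \autoref{rmk:composition}), and the five lemma gives the middle isomorphism, identifying $B_n/B'_n[2^k]$ with the claimed preimage. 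The cases $k=0,1$ are read off directly.

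The step I expect to be the main obstacle is establishing that $\psi$ carries the congruence filtration of $\Gamma_{n-1}$ onto that of $\Gamma'_{n-1}$ in the even-$n$ case, i.e.\ that $\psi(\Gamma_{2g-1}[\ell])=\Gamma'_{2g-1}[\ell]$, since $\psi$ is only a set-theoretic section away from being split and the kernel $\Z$ from \autoref{lem:SESGammaodd} must be checked to interact correctly with reduction mod $\ell$; one must verify that the generator $b_g\mapsto b_g+\lambda a_g$ of the kernel lies in $\Gamma_{2g-1}[\ell]$ precisely when $\ell\mid\lambda$, so that the induced map on $\Gamma_{2g-1}[\ell]$ is still surjective onto $\Gamma'_{2g-1}[\ell]$. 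This is an explicit but delicate check, and everything else is a faithful translation of the arguments already given.
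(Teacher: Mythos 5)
Your overall plan (rebuild the symplectic machinery, the even-level surjectivity, and the five-lemma argument entirely in the reduced setting) is a viable route, and it is organized differently from the paper's proof, which never re-proves the analog of \autoref{prop:oddsymplecticcong} or a reduced \autoref{cor:imageelleven}: the paper instead pushes \autoref{thm:mainresult} forward along $\psi$, using only the easy inclusion $\psi(\Gamma_{n-1}[\ell])\subseteq \Gamma'_{n-1}[\ell]$ to descend $\psi$ to surjections $\Gamma_{n-1}/\Gamma_{n-1}[\ell]\twoheadrightarrow \Gamma'_{n-1}/\Gamma'_{n-1}[\ell]$, plus the reduced braid multiplicativity and the faithfulness of $S_n$ on $\F_2^{n-1}$. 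However, your version has a genuine gap exactly at the step you flag as the crux. You need the surjectivity $\psi(\Gamma_{2g-1}[\ell])=\Gamma'_{2g-1}[\ell]$, and your proposed verification---that the kernel generator $T\colon b_g\mapsto b_g+a_g$ satisfies $T^{\lambda}\in\Gamma_{2g-1}[\ell]$ precisely when $\ell\mid\lambda$---does not imply it. Knowing $\ker\psi\cap\Gamma_{2g-1}[\ell]=\ell\Z$ says nothing about which elements of $\Gamma'_{2g-1}[\ell]$ are hit: the full $\psi$-preimage of a given $B\in\Gamma'_{2g-1}[\ell]$ is a coset $\{AT^{\lambda}:\lambda\in\Z\}$, and you must produce \emph{some} element of that coset congruent to $I$ mod $\ell$; the congruence class of $AT^{\lambda}$ depends on $A$, not only on $\lambda$. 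The correct check is that the explicit set-theoretic section of \autoref{lem:SESGammaodd} preserves the congruence filtration: if $B\equiv I \bmod \ell$, then $\tilde B$ fixes $a_g$, satisfies $\tilde Ba_i\equiv a_i$ and $\tilde Bb_i\equiv b_i \bmod \ell$ for $i<g$, and
\[ \tilde B b_g = b_g + \sum_{i=1}^{g-1}\bigl( \langle b_g, \tilde Ba_i\rangle b_i - \langle b_g, \tilde Bb_i\rangle a_i \bigr) \equiv b_g \bmod \ell,\]
since $\langle b_g,\tilde Ba_i\rangle\equiv\langle b_g,a_i\rangle=0$ and $\langle b_g,\tilde Bb_i\rangle\equiv\langle b_g,b_i\rangle=0 \bmod \ell$ by bilinearity. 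Hence $\tilde B\in\Gamma_{2g-1}[\ell]$ and $\psi(\tilde B)=B$, which is the surjectivity you need; with this substituted, your reduced \autoref{cor:imageelleven} and the bootstrap argument go through.

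There is a second, smaller gap in the five-lemma step. For the right-hand vertical map you need an isomorphism $B_n/B'_n[2]\cong S_n$, equivalently $B_n[2]=B'_n[2]$. Your parenthetical justification (Arnol\cprime{}d's map factors because $B_n[2]\subseteq B'_n[2]$) only yields a surjection $S_n\twoheadrightarrow B_n/B'_n[2]$, which a priori could have nontrivial kernel, and then the five lemma would not apply as stated. You must additionally verify that the composite $S_n\hookrightarrow \Gamma_{n-1}/\Gamma_{n-1}[2]\to \Gamma'_{n-1}/\Gamma'_{n-1}[2]$ is injective; as in the paper, this composite is the standard representation of $S_n$ on $\F_2^{n-1}$, which is faithful for $n\ge 3$. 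Finally, note that the theorem (unlike \autoref{thm:mainresult}) does not claim the extension for $k\ge 2$ is non-split, so your argument rightly should not (and does not) attempt to prove non-splitness; the paper explicitly leaves this open.
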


\begin{proof}
Since $\Gamma'_{n-1} \cong \Gamma_{n-1}$ for odd $n$, $B'_n[\ell] = B_n[\ell]$ in this case and there is nothing to prove.

For $n=2$, the reduced Burau representation is zero, so both $B_2/B'_2[\ell]$ and $ \Gamma'_{1}/\Gamma'_{1}[\ell] $ are trivial groups. Let us now assume that $n\ge 4$ and even

Recall that $\psi\colon \Gamma_{n-1} \to \Gamma'_{n-1}$ is surjective due to \autoref{lem:SESGammaodd}. It is easy to see that $\psi(\Gamma_{n-1}[\ell]) \subset \Gamma'_{n-1}[\ell]$. It follows that $\psi$ descends to a surjection $\Gamma_{n-1}/\Gamma_{n-1}[\ell]  \to\Gamma'_{n-1}/\Gamma'_{n-1}[\ell]$.  We can summarize this in the following commutative diagram.
\[\begin{tikzcd}
& \Gamma_{n-1} \arrow[rr,two heads] \arrow[dd,"\psi",two heads] && \Gamma_{n-1}/\Gamma_{n-1}[\ell] \arrow[dd,two heads] \\
B_n \arrow[ru,"\rho"]\arrow[rd,"\bar\rho"]&\\
& \Gamma'_{n-1} \arrow[rr,two heads] && \Gamma'_{n-1}/\Gamma'_{n-1}[\ell] 
\end{tikzcd}\]
This means that 
\[ B_n \stackrel{\bar \rho}{\longrightarrow} \Gamma'_{n-1} \longrightarrow \Gamma'_{n-1}/\Gamma'_{n-1}[\ell]\]
is surjective if $\ell$ is odd by \autoref{thm:mainresult} and thus $B_{n}/B'_{n}[\ell]\cong \Gamma'_{n-1}/\Gamma'_{n-1}[\ell]$ in this case.

We note that \autoref{prop:braidcong} and \autoref{cor:multbraid} can be proved completely analogously for $B'_n[\ell]$. Therefore, it remains to prove that $B_n/B'_n[2^k]$ is a non-split extension of $S_n$ by $\Gamma'_{n-1}[2]/\Gamma'_{n-1}[2^k]$. First, we note that there is the following commutative diagram.
\[\begin{tikzcd}
&  & \Gamma_{n-1}/\Gamma_{n-1}[2] \arrow[dd,two heads] \\
B_n \arrow[rru,"\rho",bend left=20]\arrow[rrd,"\bar\rho",bend right=20,swap]\arrow[r,two heads]& S_n\arrow[ru,hook]\arrow[rd,hook]\\
&  & \Gamma'_{n-1}/\Gamma'_{n-1}[2]
\end{tikzcd}\]
The image of $\rho$ is clearly the permutation matrices modulo $2$ in $\Gamma_{n-1}/\Gamma_{n-1}[2]$. It remains to prove that the composition $S_n \to \Gamma_{n-1}/\Gamma_{n-1}[2] \to \Gamma'_{n-1}/\Gamma'_{n-1}[2]$. It is easy to see that this is the standard representation of $S_n$ on $\mathbb F_2^{n-1}$, which is faithful for $n\ge 3$.

Combining this commutative diagram with the information from the previous commutative diagram, we get that the image of $B_n$ in $\Gamma'_{n-1}/\Gamma'_{n-1}[2^k]$ is precisely the preimage of $S_n$ via $\Gamma'_{n-1}/\Gamma'_{n-1}[2^k] \to \Gamma'_{n-1}/\Gamma'_{n-1}[2]$. This implies that $B_n/B'_n[2^k]$ is an extension of $S_n$ by $\Gamma'_{n-1}[2]/\Gamma'_{n-1}[2^k]$.
\end{proof}

\begin{remark}
We were not able to easily determine if the extension of $S_n$ by $\Gamma'_{n-1}[2]/\Gamma'_{n-1}[2^k]$ split or not.
\end{remark}

The analog of \autoref{thm:prob3.4} is the following theorem.

\begin{theorem}
The image of $B'_n[\ell]$ in $\GL_{n-1}(\Z)$ under the reduced integral Burau representation is completely characterized as follows.
\begin{enumerate}
    \item If $\ell$ is even or $n=2,3$, the image is \[\Gamma'_{n-1}[\ell].\]
    \item If $n\ge 4$ and $\ell$ is odd the image is the preimage of 
    \[\begin{tikzcd}S_n \arrow[r, hook]& \Gamma'_{n-1}[\ell]/\Gamma'_{n-1}[2\ell] \end{tikzcd}\]
 along the quotient map $\Gamma'_{n-1}[\ell]\longrightarrow \Gamma'_{n-1}[\ell]/\Gamma'_{n-1}[2\ell].$
\end{enumerate}
\end{theorem}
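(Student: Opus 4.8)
The plan is to mirror the proof of \autoref{thm:prob3.4}, pushing everything through the surjection $\psi\colon \Gamma_{n-1}\to\Gamma'_{n-1}$ of \autoref{rmk:composition}, for which $\bar\rho=\psi\circ\rho$. For odd $n$ the map $\psi$ is the isomorphism of \autoref{lem:Gamma'}, so $B'_n[\ell]=B_n[\ell]$ and the claim is exactly \autoref{thm:prob3.4} transported along $\psi$ (which sends $\Gamma_{n-1}[\ell]$ to $\Gamma'_{n-1}[\ell]$ and respects the copy of $S_n$); this disposes of $n=3$ and of all odd $n$. For $n=2$ the reduced representation is zero, so both sides are trivial. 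Thus the only new content is for even $n\ge 4$, which I assume from now on.

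First I would record the key lemma that $\psi$ restricts to a \emph{surjection} $\Gamma_{n-1}[\ell]\twoheadrightarrow\Gamma'_{n-1}[\ell]$. This does not follow formally from surjectivity of $\psi$, but it can be read off the explicit set-theoretic section $B\mapsto\tilde B$ constructed in the proof of \autoref{lem:SESGammaodd}. If $B\equiv I\bmod\ell$, then $\tilde B$ acts on $a_1,b_1,\dots,a_g$ the way $B$ acts on $a'_1,b'_1,\dots,a'_g$, hence $\tilde B\equiv I\bmod\ell$ on those vectors; and the correction terms defining $\tilde B b_g$ have coefficients $\langle b_g,\tilde B a_i\rangle\equiv\langle b_g,a_i\rangle=0$ and $\langle b_g,\tilde B b_i\rangle\equiv\langle b_g,b_i\rangle=0$ modulo $\ell$ for $i<g$, so $\tilde B b_g\equiv b_g$. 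Since the symplectic basis $\{a_i,b_i\}$ differs from the standard basis by a unimodular change of coordinates, this gives $\tilde B\in\Gamma_{n-1}[\ell]$. This bookkeeping check is the one genuinely new step.

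With this lemma the case of even $\ell$ is immediate, and it settles case (1) for even $n$: the image $\bar\rho(B'_n[\ell])$ always lies in $\Gamma'_{n-1}[\ell]$, while $B_n[\ell]\subseteq B'_n[\ell]$ gives
\[ \bar\rho(B'_n[\ell])\supseteq\psi\big(\rho(B_n[\ell])\big)=\psi\big(\Gamma_{n-1}[\ell]\big)=\Gamma'_{n-1}[\ell],\]
using \autoref{cor:imageelleven} and the key lemma. Hence the image is exactly $\Gamma'_{n-1}[\ell]$.

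Finally, for odd $\ell$ I would run the same diagram chase as in \autoref{thm:prob3.4}, via the morphism of short exact sequences
\[\begin{tikzcd}
 1 \arrow[r]&  B'_n[2\ell]\arrow[d,two heads]\arrow[r] &  B'_n[\ell]\arrow[d] \arrow[r]  &S_n\arrow[d,hook]\arrow[r]&  1 \\
1 \arrow[r]& \Gamma'_{n-1}[2\ell]\arrow[r] &  \Gamma'_{n-1}[\ell]\arrow[r]  &\Gamma'_{n-1}[\ell]/\Gamma'_{n-1}[2\ell]\arrow[r]&  1.
\end{tikzcd}\]
The identification $B'_n[\ell]/B'_n[2\ell]\cong S_n$ and the inclusion $S_n\hookrightarrow\Gamma'_{n-1}[\ell]/\Gamma'_{n-1}[2\ell]$ follow from the reduced analog of \autoref{thm:mainresult} together with the analog of \autoref{cor:multbraid} for $B'_n[\ell]$, and the left vertical map is surjective by the even-$\ell$ case applied to $2\ell$. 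Given $x$ in the preimage of $S_n$ with image $y\in S_n$, lift $y$ to $z\in B'_n[\ell]$, so that $\bar\rho(z)^{-1}x\in\Gamma'_{n-1}[2\ell]$; lifting this element to $w\in B'_n[2\ell]$ yields $zw\in B'_n[\ell]$ with $\bar\rho(zw)=x$, while the reverse containment is read directly off the diagram. The main obstacle is the congruence-preservation of the section of $\psi$ in the second paragraph; the remainder is a transcription of the unreduced arguments.
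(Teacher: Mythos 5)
Your proposal is correct, and it reduces the theorem to exactly the same key lemma as the paper does for even $n$ --- that $\psi$ restricts to a surjection $\Gamma_{n-1}[\ell]\twoheadrightarrow\Gamma'_{n-1}[\ell]$ --- but you prove that lemma by a genuinely different argument. The paper's proof is homological: it maps the short exact sequence $1\to\Gamma_{n-1}[\ell]\to\Gamma_{n-1}\to\Gamma_{n-1}/\Gamma_{n-1}[\ell]\to1$ onto the corresponding sequence for $\Gamma'_{n-1}$, identifies the kernels of the middle and right vertical maps as $\Z$ and $\Z/\ell\Z$ (by \autoref{lem:SESGammaodd} and ``analogously''), notes the map between these kernels is the surjective quotient map $\Z\to\Z/\ell\Z$, and invokes the snake lemma. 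You instead verify by hand that the explicit set-theoretic section $B\mapsto\tilde B$ from the proof of \autoref{lem:SESGammaodd} preserves the level-$\ell$ congruence: if $B\equiv I\bmod\ell$ then the correction coefficients $\langle b_g,\tilde Ba_i\rangle$ and $\langle b_g,\tilde Bb_i\rangle$ are congruent mod $\ell$ to $\langle b_g,a_i\rangle=0$ and $\langle b_g,b_i\rangle=0$, so $\tilde B\equiv I\bmod\ell$ in the symplectic basis, hence in the standard basis since the change of coordinates is unimodular. This is more hands-on, but it actually supplies the content that the paper's ``analogously'' leaves implicit (identifying the kernel of $\Gamma_{n-1}/\Gamma_{n-1}[\ell]\to\Gamma'_{n-1}/\Gamma'_{n-1}[\ell]$ amounts to showing $\psi^{-1}(\Gamma'_{n-1}[\ell])=\ker\psi\cdot\Gamma_{n-1}[\ell]$, which a congruence-preserving section gives at once); in that sense your version is more self-contained. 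The remaining difference is explicitness rather than substance: the paper compresses the deduction of both cases from the key lemma into ``it is enough to prove,'' whereas you spell it out --- the containment $\bar\rho(B'_n[\ell])\supseteq\psi\big(\rho(B_n[\ell])\big)=\Gamma'_{n-1}[\ell]$ via \autoref{cor:imageelleven} for even $\ell$, and for odd $\ell$ the diagram chase transcribed from \autoref{thm:prob3.4}, whose inputs ($B'_n[\ell]/B'_n[2\ell]\cong S_n$ from the reduced analogs of \autoref{thm:mainresult} and \autoref{cor:multbraid}, injectivity into $\Gamma'_{n-1}[\ell]/\Gamma'_{n-1}[2\ell]$ because $B'_n[2\ell]$ is by definition the kernel) you assemble correctly. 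Both routes are valid; the snake lemma is shorter, while your argument is elementary, makes the reduction fully explicit, and doubles as a proof of the kernel identification the paper asserts.
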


\begin{proof}
Again, for $n$ odd, there is nothing to prove as $\Gamma_{n-1} \cong \Gamma'_{n-1}$ in that case.

We shall assume that $n$ is even. It is enough to prove that $\Gamma_{n-1}[\ell] $ surjects onto $\Gamma'_{n-1}[\ell]$. To see this we consider the following map of short exact sequences.
\begin{equation*}
\begin{tikzcd}
 1 \arrow[r]&  \Gamma_{n-1}[\ell]\arrow[d]\arrow[r] &  \Gamma_{n-1}\arrow[d, two heads] \arrow[r]  &\Gamma_{n-1}/\Gamma_{n-1}[\ell]\arrow[d, two heads]\arrow[r]&  1 \\
1 \arrow[r]& \Gamma'_{n-1}[\ell]\arrow[r] &  \Gamma'_{n-1}\arrow[r]  &\Gamma'_{n-1}/\Gamma'_{n-1}[\ell]\arrow[r]&  1 
\end{tikzcd}
\end{equation*}
By \autoref{lem:SESGammaodd}, the kernel of $\Gamma_{n-1} \twoheadrightarrow \Gamma'_{n-1}$ is $\Z$ and analogously the kernel of $\Gamma_{n-1}/\Gamma_{n-1}[\ell] \twoheadrightarrow \Gamma'_{n-1}/\Gamma'_{n-1}[\ell]$ is $\Z/\ell\Z$. The map between these kernels is simply the surjective quotient map. The snake lemma implies that $\Gamma_{n-1}[\ell] \to \Gamma'_{n-1}[\ell]$ is surjective.
\end{proof}

\bibliographystyle{alpha}
\bibliography{bibliography}{}

\end{document}